\declaretheorem[within=section]{theorem}
\declaretheorem[sibling=theorem]{lemma}
\declaretheorem[sibling=theorem,style=definition]{definition}
\declaretheorem[sibling=theorem,style=remark]{remark}
\declaretheorem[sibling=theorem,style=remark]{example}
\DeclareMathOperator{\openpetri}{\mathbf{OPetri}}
\DeclareMathOperator{\mope}{\mathbf{MOPetri}}
\DeclareMathOperator{\finset}{\mathbf{FinSet}}
\newcommand{\bt}{\beta}
\DeclareMathOperator{\bn}{\cc{B}\mathbb{N}}
\newcommand{\rest}{|}
\newcommand{\cp}{\mathbb{;}}
\newcommand{\id}{\textrm{id}}
\tikzset{snake it/.style={decorate, decoration=snake}}
\newcommand{\defined}[1]{\textbf{#1}}
\tikzstyle{simple}=[-,line width=2.000]
\tikzstyle{arrow}=[-,postaction={decorate},decoration={markings,mark=at position .5 with {\arrow{>}}},line width=1.100]
\tikzstyle{none}=[inner sep=0pt]
\definecolor{lblue}{rgb}{0,250,255}
\tikzstyle{species}=[circle,fill=yellow,draw=black,scale=1.15]
\tikzstyle{transition}=[rectangle,fill=lblue,draw=black,scale=1.15]
\tikzstyle{inarrow}=[->, >=stealth, shorten >=.03cm,line width=1.5]
\tikzstyle{empty}=[circle,fill=none, draw=none]
\tikzstyle{inputdot}=[circle,fill=purple,draw=purple, scale=.25]
\tikzstyle{inputarrow}=[->,draw=purple, shorten >=.05cm]
\tikzstyle{simple}=[-,draw=purple,line width=1.000]
\newcommand{\pr}{^\prime}
\newcommand{\cc}[1]{\mathbf{#1}}
\DeclareMathOperator{\Ea}{\mathcal{E}}  
\DeclareMathOperator{\Ga}{\mathcal{G}}
\DeclareMathOperator{\Ma}{\mathcal{M}}  
  \DeclareMathOperator{\Nb}{\mathbb{N}}
\DeclareMathOperator{\Pa}{\mathcal{P}}  
\DeclareMathOperator{\Qa}{\mathcal{Q}}  
\DeclareMathOperator{\Ra}{\mathcal{R}}
\newcommand{\Nn}{\mathbb{N}}
\begin{document}

\title{Additive Invariants of Open Petri Nets}
\date{}

\author{Benjamin Merlin Bumpus}
\email{benjamin.merlin.bumpus(at)gmail.com}
\orcid{0000-0002-8686-2319}
\thanks{While preparing this article, this author also received funding by the European Research Council (ERC) under the European Union's Horizon 2020 research and innovation programme (grant agreement No 803421, ReduceSearch).}
\affiliation{Department of Computer and Information Science and Engineering, University of Florida, Gainesville, USA.}

\author{Sophie Libkind}
\email{sophie(at)topos.institute}
\orcid{0009-0008-1354-7589}
\affiliation{Topos Institute, Berkeley, USA.}

\author{Jordy Lopez Garcia}
\email{jordy.lopez(at)tamu.edu}
\orcid{ 0000-0001-6022-5883}
\affiliation{Texas A\&M University, Department of Mathematics, College Station, USA.}

\author{Layla Sorkatti}
\email{layla.sorkatti(at)bath.edu}
\orcid{0000-0001-6507-3858}
\affiliation{Southern Illinois University, School of Mathematical and Statistical Sciences, Carbondale, USA; 
University of Khartoum and Al-Neelain University, Department of Pure Mathematics, Khartoum, Sudan.
}
\thanks{While preparing this article, this author also received funding by the Southern Illinois University, Simon Foundation and AMS-Simons Travel Grant.}

\author{Samuel Tenka}
\email{samtenka(at)mit.edu}
\affiliation{MIT, Computer Science and Artificial Intelligence Laboratory (CSAIL), Cambridge, USA.
}
\maketitle

\begin{abstract}
We classify all additive invariants of open Petri nets: these are $\mathbb{N}$-valued invariants which are additive with respect to sequential and parallel composition of open Petri nets. In particular, we prove two classification theorems: one for open Petri nets and one for monically open Petri nets (i.e.\ open Petri nets whose interfaces are specified by monic maps). Our results can be summarized as follows. The additive invariants of open Petri nets are completely determined by their values on a particular class of single-transition Petri nets. However, for monically open Petri nets, the additive invariants are determined by their values on transitionless Petri nets and all single-transition Petri nets. Our results confirm a conjecture of John Baez (stated during the AMS' 2022 Mathematical Research Communities workshop). 
\end{abstract}

\section{Introduction} 
\newcommand{\hydrogen}{\textrm{H}}
\newcommand{\oxygen}{\textrm{O}}

A Petri net is a directed bipartite graph with bipartition $(S,T)$ consisting of a set $S$ of \emph{species} and a set $T$ of \emph{transitions}. They were invented by Carl Petri in 1939~\cite{petri-petrinet} as a graphical representation of a set $S$ of chemical compounds that can be combined by way of a set of reactions $T$, into new compounds.
For example, consider the following Petri net representing the electrolysis
$
  \mathcal{E} \colon 2\hydrogen_2\oxygen \to 2\hydrogen_2 + \oxygen_2
$ of water to make oxygen and hydrogen gas. It consists of only one transition --- namely $\mathcal{E}$ --- and three species: $\hydrogen_2\oxygen$, $\hydrogen_2$ and $\oxygen_2$ as drawn below.

\begin{center}
    \begin{tikzpicture}
\begin{pgfonlayer}{nodelayer}
	\node [style=species ] (A) at (-4.5, 0) {$\hydrogen_2\oxygen$};
	\node [style=species] (B) at (-1, 1) {$\hydrogen_2$};
	\node [style=species] (C) at (-1,-1) {$\oxygen_{2}$};
    \node [style=transition, fill=green!40, inner sep= 2mm] (aa) at (-2.5, 0) {$\varepsilon$}; 
\end{pgfonlayer}

\begin{pgfonlayer}{edgelayer}
	\draw [style=inarrow, bend right=15, looseness=1.00] (A) to (aa);
	\draw [style=inarrow, bend left= 15, looseness=1.00] (A) to (aa);
	\draw [style=inarrow, bend right=15, looseness=1.00] (aa) to (B);
	\draw [style=inarrow, bend left=15, looseness=1.00] (aa) to (B);
	\draw [style=inarrow] (aa) to (C);
\end{pgfonlayer}
\end{tikzpicture}
\end{center}

In general, the applications of Petri nets need not be confined to chemistry. Indeed, they can represent many kinds of processes (concurrent, asynchronous, distributed, parallel, nondeterministic and stochastic, to name a few) in which some entities (the species) undergo transformations (transitions) in order to be converted into other kinds of entities~\cite{baez2018quantum,haas2006stochastic,jensen2009coloured,koch2010petri,koch2010modeling,marsan1998modelling, petri-net-theory-book, wilkinson2018stochastic}.

Formally, a \defined{Petri net} $P$ with \textit{finitely many} \defined{species} $S$ and \defined{transitions} $T$ is a graph with source and target maps $s,t \colon T \to \mathbb{N}^S$, where $\mathbb{N}^S$ is the free commutative monoid on $S$.  We denote a Petri net $P$ by the quadruple $(S, T, s, t)$. In the example above, the source and target maps $s$ and $t$ are defined by:
\begin{align*}
    s(\Ea)(\hydrogen_2 \oxygen) = 2, \quad  s(\Ea)(\hydrogen_2) = 0, \quad s(\Ea)(\oxygen_2) =0,\\
    t(\Ea)(\hydrogen_2 \oxygen) = 0, \quad  t(\Ea)(\hydrogen_2) = 2, \quad t(\Ea)(\oxygen_2) = 1.
\end{align*} 
For a transition $\tau \in T$ and species 
$\sigma \in S$, the quantity \(s(\tau)(\sigma)\) represents the number of \defined{input arcs} which emanate from the species \(\sigma\) to the transition \(\tau\). Similarly, the quantity \(t(\tau)(\sigma)\) represents the number of \defined{output arcs} which emanate from the transition \(\tau\) to the species \(\sigma\).
We say that $\sigma \in S$ is an \defined{input species} (resp. \defined{output species}) of the transition $\tau \in T$ if $s(\tau)(\sigma) > 0$ (resp. $t(\tau)(\sigma) > 0$). Accordingly, the total number of input arcs into and output arcs out of a transition \(\tau\) are given by: 
\begin{equation}\label{eq:input-arcs}
    \sum_{\sigma \in S} s(\tau)(\sigma) 
\end{equation}
and 
\begin{equation}\label{eq:output-arcs}
    \sum_{\sigma \in S} t(\tau)(\sigma).
\end{equation}
In the example of electrolysis, the transition $\Ea$ has two input arcs and three output arcs. The molecule $\hydrogen_2 \oxygen$ is its input species and the molecules $\hydrogen_2$ and $\oxygen_2$ are its output species.

\subsection{Invariants of composable Petri nets}

The study of composable Petri nets was introduced by Baldan, Corradini, Ehrig, Heckel, and K\"{o}nig~\cite{baldan_corradini_ehrig_heckel_2005,baldan2001compositional}. Baez and Pollard~\cite{reaction-networks-pollard-baez} used the formalism of decorated cospans to introduce tensoring of open Petri nets. In the same vein as our decomposition lemmas that appear in Section~\ref{sec:decomposition}, Nielsen, Priese and Sassone~\cite{nielsen1995characterizing} showed that a finite Petri net is built from single-place and single-transition Petri nets via a collection of operations on nets known as \textit{combinators}. Gadduci and Heckel~\cite{gadducci1997inductive} presented a theorem (referred to as \textit{the Kindred theorem}) for the decomposition of graphs into fundamental components.

Many fields have vast databases that record many kinds of chemical reactions and their associated Petri nets~\cite{keserHu2014anthropogenic,lawson2014making, reedexpanded, van2012substances}. These are studied empirically by computationally seeking patterns, \textit{motifs}~\cite{tyson2010functional}, and \textit{numerical invariants} that arise in this data. Often due to the sheer size of the Petri nets in such databases, it is convenient to think of a large Petri net as being built out of smaller constituent nets which are ``glued'' together to form the entire net. This compositional structure of Petri nets is particularly useful when one wishes to study structural measurements that are \textit{isomorphism-invariant} and which are \textit{compositional} in the sense that they behave nicely with respect to this kind of gluing. In this article, as is customary in many areas of mathematics (cf. algebraic topology, graph theory etc.) we will adopt the term \defined{invariant} as a more concise substitute for the term
``isomorphism-invariant measurement''. 

The literature on Petri nets contains other uses of the term invariant (e.g. P- and T- invariants) which are topological properties which do not depend on the initial marking of a Petri net. This usage is very similar to ours; however, in this article we are not concerned with markings and instead we are interested in invariants which satisfy the following two requirements:
\begin{enumerate}
    \item For a Petri net built out of smaller parts, the overall invariant value should be determinable solely from the values on the pieces.

    \item It is \textit{computationally cheap} to compute the invariant on the whole Petri net from its values on the pieces.
\end{enumerate}
The first requirement is in a similar vein to that of the work of Baldan, Corradini, Ehrig, and Heckel~\cite{baldan_corradini_ehrig_heckel_2005} (which studies compositional invariants different from the ones studied in this article). On the other hand, the second requirement is born from the overarching question which, sitting in the background, motivates this paper; namely: ``\textit{which structural invariants of very large, real-world Petri nets can be used to synthetically generate Petri nets with comparable structural statistics?}'' Although this question is beyond the scope of the present paper, it serves as a motivation to determine \textit{isomorphism-invariant} measurements which are both \textit{compositional} and \textit{easily computable}, even on truly vast Petri nets.

There are several software systems for graphical modeling. In particular,  \href{https://github.com/AlgebraicJulia/AlgebraicPetri.jl}{AlgebraicPetri.jl} supports the definition and composition of open Petri nets.  More recently, Catcolab~\cite{catcolab} implements a more dynamic modeling setting that includes \defined{motif analysis}~\cite{motifs} for stock and flow diagrams, which is a strategy that could be generalized to identify the atoms of a Petri net. Together, these tools demonstrate the feasibility of implementing additive invariants for open Petri nets. Such an implementation would allow us to generate large synthetic Petri nets whose invariants satisfy certain constraints.

To that end, this paper focuses on one of the first, obvious requirements that one might put on an invariant so as to render its compositional computation efficient, that is, \textit{additivity}. By this we mean that we are interested in invariants $F$ which can be computed on any large Petri net $P$ as a sum $F(P) = \sum_{i = 1}^N F(P_i)$ whenever $P$ is built as a gluing of many small nets $P_1, \dots, P_N$. In this paper we completely determine \textit{all} of the additive invariants for composable Petri nets. 
Our results show that additivity forces the invariants to describe simple structural requirements. Thus one can think of a ladder of semantic complexity of isomorphism-invariant mappings on Petri nets where additive invariants occupy lower rungs  compared to  invariants such as mass action kinetics which is a map, defined in~\cite{reaction-networks-pollard-baez}, from Petri nets into differential equations that respects gluing.

\subsection{Contributions}
In this paper, we will classify all $\mathbb{N}$-valued invariants of open Petri nets which are additive with respect to composition and monoidal product in the category of open Petri nets, $\openpetri$. In particular we show that the additive invariants of open Petri nets are completely determined by their values on a particular class of single-transition Petri nets. 

We also give a similar classification for the $\mathbb{N}$-valued invariants for the category of open Petri nets with monic legs, $\mope$, which embeds faithfully into $\openpetri$.  All additive invariants on $\openpetri$ nets are also additive invariants on $\mope$. However, the converse is not true. We show that the additive invariants on $\mope$ are determined by their values on all single-transition Petri nets as well as transitionless Petri nets.

The classification of these additive invariants relies on two decomposition lemmas for open Petri nets. Given the large literature on open Petri nets in applied category theory, these lemmas are of independent significance. 

The paper is structured as follows. In Section~\ref{sec:prelim} we introduce the category of open Petri nets, $\openpetri$ and the category of open Petri nets with monic legs, $\mope$. We then introduce many of the notions that we use in the decomposition lemmas and in classifying invariants. These include particular classes  of transitionless and single transition Petri nets. In Section~\ref{sec:decomposition} we give the decomposition lemmas. In Section~\ref{sec:invariants} we show in Theorem~\ref{thm:all-invariants-are-additive} that in fact all invariants of open Petri nets are additive. Finally, we classify the additive invariants of open Petri nets in Theorem~\ref{thm:open-petri-main-thm} and  of monically open Petri nets in Theorem~\ref{thm:mope-invariant-classification}.

\paragraph{Acknowledgements} We thank John Baez for leading and guiding this project. Furthermore, we would like to thank the American Mathematical Society for its support and the organisers of the AMS' 2022 MRC (Mathematics Research Communities) on Applied Category Theory for setting up this collaboration. Additionally, we thank the referees for a number of insightful remarks that led to the expansion of the discussion on related work, improvement in exposition, and a few simplifications.

\section{Preliminaries} \label{sec:prelim}
\subsection{Open Petri nets}

Open Petri nets are Petri nets equipped with sets of input and output species acting as \textit{interfaces}. Formally, an \defined{open Petri net} is a Petri net \(P\) and a cospan of sets \(X \xrightarrow{i} S \xleftarrow{o} Y\) where $S$ is the species set of the Petri net $P$. The pair \( \Pa = (X \xrightarrow{i} S \xleftarrow{o} Y, \; P)\) is an open Petri net \defined{decorated by} the Petri net $P$. 

Baez and Pollard show~\cite[Theorem 6]{reaction-networks-pollard-baez} that open Petri nets should be thought of as \textit{morphisms} in a \textit{symmetric monoidal category $\openpetri$} of open Petri nets. The objects of this category are finite sets, the morphisms are isomorphism classes of open Petri nets, and composition --- similarly to other cospan categories~\cite{johnson20212} --- is roughly obtained by pushout.
One should think of this as \textit{gluing} Petri nets together along shared interfaces. We invite the reader to consult Baez and Pollard's paper~\cite[Theorem 6]{reaction-networks-pollard-baez} for a thorough account on how to define the category $\openpetri$ by applying Fong's theory of decorated cospans~\cite{fong2016algebra}.

The composition and tensor product of two open Petri nets are applied in many of the proofs throughout Sections~\ref{sec:invariants} and~\ref{sec:decomposition}. For clarity, we give their explicit definition here. Let
\begin{align*}
    \Pa &= \left(X \xrightarrow{i} S \xleftarrow{o} Y, \; \left(s, t\colon T \to \mathbb{N}^S\right) \right), \text{ and} \\
    \Qa &= \left(Y \xrightarrow{i\pr} S\pr \xleftarrow{o\pr} Z, \; \left(s\pr, t\pr\colon T\pr \to \mathbb{N}^{S\pr}\right) \right),
\end{align*}
be two open Petri nets. To compose $\Pa $ and $\Qa $, first we form the pushout:
\[\begin{tikzcd}[column sep={5em,between origins}, row sep={5em,between origins}]
                  &                                       & S+_{Y}S'                            &                                        &                    \\
                  & S \arrow[ru, "\overline{i\pr}", dashed] &                                     & S' \arrow[lu, "\overline{o}"', dashed] &                    \\
X \arrow[ru, "i"] &                                       & Y \arrow[lu, "o"'] \arrow[ru, "i\pr"] &                                        & Z \arrow[lu, "o\pr"']
\arrow["\lrcorner"{anchor=center, pos=.85, rotate=-45, scale = 2}, draw=none, from=3-3, to=1-3]
\end{tikzcd}\]
where the maps $\overline{i\pr}$ and $\overline{o}$ are the canonical morphisms from $S$ and $S'$ to the pushout object $S+_{Y}S'$. 

Then the composite is the open Petri net \[\Pa \cp \Qa \coloneqq \left( X \xrightarrow{\overline{i\pr} \circ i} S +_Y S\pr \xleftarrow{\overline{o} \circ o\pr} Z,\ \left(\bar s, \bar t\colon  T + T\pr \to \mathbb{N}^{S +_Y S\pr}\right) \right)\] where the maps $\bar s, \bar t\colon T + T\pr \to \Nb^{S+_Y S\pr}$ are given by 
\begin{equation}\label{eq:composite-source}
    \bar s(\tau)( \bar \sigma) = \begin{cases} 
        \sum_{\sigma \in S | \overline{i\pr}(\sigma) = \bar \sigma} s(\tau)(\sigma)& \tau \in T\\
        \sum_{\sigma\pr \in S\pr | \overline{o}(\sigma\pr) = \bar \sigma} s\pr(\tau)(\sigma\pr)& \tau \in T\pr\\
    \end{cases},
\end{equation}
and
\begin{equation}\label{eq:composite-target}
    \bar t(\tau)( \bar \sigma) = \begin{cases}  
        \sum_{\sigma \in S | \overline{i\pr}(\sigma) = \bar \sigma} t(\tau)(\sigma)&  \tau \in T\\
        \sum_{\sigma\pr \in S\pr | \overline{o}(\sigma\pr) = \bar \sigma} t\pr(\tau)(\sigma\pr)& \tau \in T\pr\\
    \end{cases}.
\end{equation}

The monoidal product $\oplus$ in $\openpetri$ is defined on objects as their disjoint union. For two open Petri nets  $\Pa = \left(X \xrightarrow{i} S \xleftarrow{o} Y, P = \left(s,t\colon T \to \Nb^S\right)\right)$ and $\Qa = \left(X\pr \xrightarrow{i\pr} S\pr \xleftarrow{o\pr} Y\pr, Q = \left(s\pr,t\pr\colon T\pr \to \Nb^{S\pr}\right)\right)$, their monoidal product is
\begin{equation*}
    \Pa \oplus \Qa\coloneqq\left(X+X\pr\xrightarrow{i+i\pr}S+S\pr\xleftarrow{o+o\pr}Y+Y\pr,\left(s+s\pr,t+t\pr\colon T+T\pr\to \mathbb{N}^{S+S\pr}\right)\right),
\end{equation*}
where $S+S\pr$ and $T+T\pr$ are the disjoint unions of the species and transitions sets of $\Pa$ and $\Qa$, respectively. The map $s+s\pr\colon T+T\pr\to S+S\pr$ sends $\tau\in T$ to $s(\tau)\in S$, and $\tau\pr\in T\pr$ to $s\pr(\tau\pr)\in S\pr$. Similarly, the map $t+t\pr\colon T+T\pr\to S+S\pr$ sends $\tau\in T$ to $t(\tau)\in S$, and $\tau\pr\in T\pr$ to $t\pr(\tau\pr)\in S\pr$.

In addition to invariants of open Petri nets we are also interested in invariants for a subclass of open Petri nets defined as follows. 

\begin{definition}
 A \defined{monically open Petri net} --- or \defined{mope net} for short --- is an open Petri nets whose underlying cospan consists of a pair of monomorphisms. 
\end{definition}

Since the composition of two mope nets is, again, a mope net, these open Petri nets form a category $\mope$ which is a wide subcategory of $\openpetri$.  In $\openpetri$, composing open Petri nets can identify two species in the decoration of one of the factors. For example, an input species and an output species of a transition may be identified so that they appear as a single catalyst in the composite. 
This identification is not possible in $\mope$. Instead, composition in $\mope$ preserves the relation between each transition and its input and output species.

\subsection{Transitionless open Petri nets}

In several constructions throughout the main sections of this paper, we refer to Petri nets with no transitions. For ease of notation, we introduce the following definition.

\begin{definition}\label{def:no-transition-initial-legs}
For a finite set $S$, let $0_S$ denote the unique Petri net with $S$ species and  no transitions. Then the source and target maps are both the unique morphism $!\colon 0 \to \Nb^S$. Explicitly, $0_S \coloneqq (S, 0, !,!)$.
\end{definition}

A \defined{transitionless open Petri net} is an open Petri net whose decoration is transitionless. These are generated from several building blocks as we show in the following lemma.

\begin{lemma}\label{lem:generating-transitionless-petris}
Any transitionless open Petri net is generated via the tensor product and composite from the following (transitionless) open Petri nets:
\begin{align*}
    \mu & := (2 \to 1 \leftarrow 1, 0_1),\\
    \eta & := (0 \to 1 \leftarrow 1, 0_1),\\
    \delta & := ( 1 \to 1 \leftarrow 2, 0_1),\\
    \epsilon & := (1 \to 1 \leftarrow 0, 0_1).\\
\end{align*}
\end{lemma}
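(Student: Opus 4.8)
The plan is to show that every transitionless open Petri net $\Pa = (X \xrightarrow{i} S \xleftarrow{o} Y,\ 0_S)$ can be assembled from the four generators $\mu, \eta, \delta, \epsilon$ using $\oplus$ and $\cp$. Since the decoration $0_S$ carries no data beyond the set $S$, such an open Petri net is completely determined by the cospan of finite sets $X \xrightarrow{i} S \xleftarrow{o} Y$; so the task reduces to realizing an arbitrary cospan of finite sets, decorated trivially, as a composite of the generators. First I would record what the generators do: $\mu$ merges two input wires into one species, $\eta$ creates a species not hit by any input, $\delta$ duplicates a species onto two output wires, and $\epsilon$ is a species not seen by any output; each is built on the one-element species set. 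Tensoring copies of $\petriid_1 := (1 \to 1 \leftarrow 1, 0_1)$ (itself obtainable, e.g., as $\mu \cp \delta$ or noted to be the monoidal unit's generator — I would double-check which, but an identity-on-$1$ open net is easily seen to be expressible) gives identities on arbitrary finite sets, which lets me pad and permute.

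The key steps, in order, are: (1) observe that $0_S \cp 0_{S'}$ along an interface performs exactly a pushout of the underlying sets, with the trivial decoration on the pushout, so composition of transitionless open nets computes composition of cospans in $\mathbf{FinSet}$ (this is immediate from the composition formulas, since with no transitions equations~\eqref{eq:composite-source}--\eqref{eq:composite-target} are vacuous); (2) likewise $\oplus$ on transitionless open nets is disjoint union of cospans; (3) factor an arbitrary cospan $X \xrightarrow{i} S \xleftarrow{o} Y$ through its "middle": build $S$ from nothing, then hit it appropriately from the left by $i$ and from the right by $o$. Concretely, I would first produce the object $S$ together with a chosen surjection-then-structure presentation: use copies of $\eta$ (tensored, then post-composed with identities) to create exactly the species of $S$ not in the image of $i$, and use $\epsilon$ symmetrically on the output side; then handle the image of $i$ by noting any function $X \to S$ factors as a surjection onto its image followed by an injection, where the surjection part is a composite of $\mu$'s (iterated binary merges) tensored with identities, and the injection part is realized by tensoring with $\eta$'s; dually for $o \colon Y \to S$ using $\delta$ and $\epsilon$. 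Assembling these with $\cp$ reconstructs $\Pa$ up to the isomorphism-class equivalence under which morphisms of $\openpetri$ are taken, which is all we need.

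The main obstacle I anticipate is bookkeeping rather than conceptual: carefully decomposing an arbitrary function $X \to S$ into iterated binary "merges" ($\mu$) and "insertions of a fresh point" ($\eta$), interleaved with identity wires, and making sure the two sides ($i$ on the left, $o$ on the right) can be handled independently and then glued along $S$ without the pushout introducing unwanted identifications. The clean way to organize this is to first reduce to the case where $i$ and $o$ are jointly surjective onto $S$ (splitting off the purely "internal" species, those in neither image, via $\epsilon \cp \eta$-type gadgets tensored in), and then to treat a surjection $X + Y \twoheadrightarrow S$ as a sequence of elementary coequalizer steps, each implemented by one $\mu$ on the $X$-side or one $\delta$ read backwards on the $Y$-side, padded by identities. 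I would also remark that symmetry/braiding morphisms are available in the symmetric monoidal category $\openpetri$ and restrict to transitionless nets, so reindexing of wires is free and need not be tracked explicitly. Finally I would note that the generating set is not claimed to be minimal — indeed $\eta$ and $\epsilon$, or $\mu$ and $\delta$, are dual under reversing cospans — and that this suffices for the later classification arguments.
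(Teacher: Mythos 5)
Your reduction is the same as the paper's first (and only substantive) step: a transitionless open Petri net carries no data beyond its underlying cospan of finite sets, and $\cp$ and $\oplus$ on transitionless nets compute pushout-composition and disjoint union of cospans, so the lemma is really a statement about $\mathbf{Cospan}(\finset)$. Where you diverge is in what you do next: the paper simply cites Lemma 3.6 of Fong's hypergraph-categories paper, which states that cospans of finite sets are generated by exactly these four cospans (the special commutative Frobenius generators), whereas you sketch a direct proof of that fact --- splitting off species in neither image, factoring each leg as surjection-then-injection, implementing surjections by iterated $\mu$'s (dually $\delta$'s) and injections by tensoring with $\eta$'s (dually $\epsilon$'s), with symmetries handling reindexing. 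Your sketch is essentially the standard proof of the cited result, so it is a legitimate self-contained alternative; what the paper's route buys is brevity, while yours buys independence from the reference. Two small bookkeeping slips worth fixing if you write this out: an isolated species is $\eta \cp \epsilon$ (composing $0 \to 1 \leftarrow 1$ with $1 \to 1 \leftarrow 0$ along the one-element set), not ``$\epsilon \cp \eta$''; and the identity relation from the paper's Remark 2.6 is $\delta \cp \mu = \id_1$, not $\mu \cp \delta$ (which is the cospan $2 \to 1 \leftarrow 2$) --- though identities are available for free in any category and need not be generated at all.
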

\begin{proof}
An open Petri net with no transitions is equivalent to a cospan in $\finset$. By Lemma 3.6 of~\cite{fong2019hypergraph}, cospans in $\finset$ are generated by the cospans underlying the morphisms $\mu$, $\eta$, $\delta$, and $\epsilon$.
\end{proof}

Of particular interest are two types of transitionless open Petri nets.

\begin{definition}\label{def:boundary-nets}%
A \defined{boundary mope net} is a mope net either of the form 
\[
\eta_{S} \coloneqq \left(0 \to S \xleftarrow{1_S} S, 0_S\right) \text{ or } \epsilon_{S} \coloneqq \left(S \xrightarrow{1_S} S \leftarrow 0, 0_S\right),
\]
for a finite set $S$.
\end{definition}

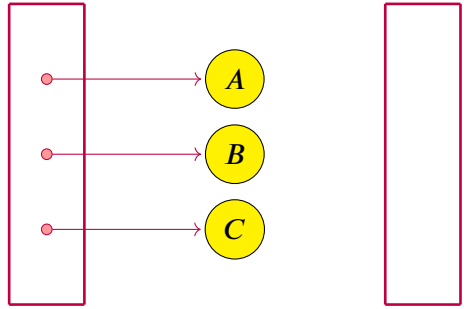
\begin{figure}[h]
    \centering
    \begin{tikzpicture}
\begin{pgfonlayer}{nodelayer}
	\node [style=species] (A) at (0,1) {$A$};
	\node [style=species] (B) at (0,0) {$B$};
	\node [style=species] (C) at (0,-1) {$C$};

	\node [style=none] (Xtr) at (-2, 2) {};
	\node [style=none] (Xbr) at (-2, -2) {};
	\node [style=none] (Xtl) at (-3, 2) {};
    \node [style=none] (Xbl) at (-3, -2) {};
	\node [style=inputdot, fill = red!40, inner sep= 2mm] (1) at (-2.5,1) {};
	\node [style=inputdot, fill = red!40, inner sep= 2mm] (2) at (-2.5,0) {};
	\node [style=inputdot, fill = red!40, inner sep= 2mm] (3) at (-2.5,-1) {};

	\node [style=none] (Ytr) at (3, 2) {};
	\node [style=none] (Ybr) at (3, -2) {};
	\node [style=none] (Ytl) at (2, 2) {};
    \node [style=none] (Ybl) at (2, -2) {};

\end{pgfonlayer}

\begin{pgfonlayer}{edgelayer}
	\draw [style=simple] (Xtl.center) to (Xtr.center);
	\draw [style=simple] (Xtr.center) to (Xbr.center);
	\draw [style=simple] (Xbr.center) to (Xbl.center);
	\draw [style=simple] (Xbl.center) to (Xtl.center);

	\draw [style=simple] (Ytl.center) to (Ytr.center);
	\draw [style=simple] (Ytr.center) to (Ybr.center);
	\draw [style=simple] (Ybr.center) to (Ybl.center);
	\draw [style=simple] (Ybl.center) to (Ytl.center);

	\draw [style=inputarrow] (1) to (A);
	\draw [style=inputarrow] (2) to (B);
	\draw [style=inputarrow] (3) to (C);
\end{pgfonlayer}
\end{tikzpicture}
    \caption{The boundary mope net $\epsilon_3$.}
    \label{fig:boundary_net}
\end{figure}

\begin{remark} \label{rem01}
Note that the boundary mope nets $\eta_1$ and $\epsilon_1$ are in fact the generators $\eta$ and $\epsilon$ defined in Lemma~\ref{lem:generating-transitionless-petris}.
Additionally, there are several useful relations between these boundary mope nets. 
\begin{align*}
    \delta \cp \mu &= \id_1,\\
    \eta_2 \cp \mu & = \eta_1\\
    \delta \cp \epsilon_2 &= \epsilon_1\\
\end{align*}
Furthermore for a finite set $S$, we have
\[
    \underbrace{\eta_1 \oplus \cdots \oplus \eta_1}_{|S| \text{ times}} = \eta_S
\] and 
\[
    \underbrace{\epsilon_1 \oplus \cdots \oplus \epsilon_1}_{|S| \text{ times}} = \epsilon_S.
\]

\end{remark}


\subsection{Atomic Petri nets}

Next we introduce classes of open Petri nets which will form the building blocks for our decomposition lemmas in Section~\ref{sec:decomposition}.

\begin{definition}
    An \defined{atomic} Petri net is a Petri net with a single transition  such that each species is connected to the transition as an input and/or as an output.
\end{definition}

\begin{definition}
    For integers $m,n$ define $P_{m,n}$ to be the atomic Petri net whose transition has $m$ distinct input species and $n$ distinct output species.

    Explicitly, $P_{m,n}$ has a single transition $\tau$ and $m + n$ distinct species.

    \[
        S = \{1,..., m + n\}
    \] with source and target maps 
    \[
        s(\tau)(i) = \begin{cases}
        1 & i = 1,..., m\\
        0 & i = m +1 ,..., m+n\\
        \end{cases}, \quad 
        t(\tau)(i) = \begin{cases}
        0 & i = 1,..., m\\
        1 & i = m +1 ,..., m+n
        \end{cases}.
    \]
\end{definition}

\begin{example}
    Figure~\ref{fig:atomic-examples} gives four examples of Petri nets with a single transition. The Petri nets in (a), (b), and (c) are all atomic. The Petri nets in Figure~\ref{fig:atomic-examples} (a) and (b) are not of the form $P_{m,n}$ for any $m,n$. In (a), this is because there is a species that is both an input species and an output species for the transition. We call this type of species a \defined{catalyst} of the transition. On the other hand, (b) is not of the form $P_{m,n}$ because there is a species connected to the transition by two output arcs.  Finally, (c) depicts the Petri net $P_{2,2}$. It has two distinct input species, each of which is connected to the transition by a single input arc. Likewise, it has two distinct output species, each of which is connected to the transition by a single output arc. 
    \par{}
    Finally the Petri net in (d) it not atomic because it contains a species that does not participate in the transition.
\end{example}

\begin{figure}
    \[
    \substack{\begin{tikzpicture}
	\begin{pgfonlayer}{nodelayer}
		\node [style=species, inner sep= 2mm] (A) at (-1.5,0){};
		\node [style=species, inner sep= 2mm] (B) at (0,2){};
		\node [style=species, inner sep= 2mm] (C) at (1.5,0){};
		\node [style=transition, inner sep= 2mm, fill=green!40] (a) at (0,0){};
	\end{pgfonlayer}

	\begin{pgfonlayer}{edgelayer}
	\end{pgfonlayer}
		\draw [style=inarrow ] (A) to (a);
		\draw [style=inarrow , bend left=35, looseness=1.00] (B) to (a);
		\draw [style=inarrow , bend left=35,looseness=1.00] (a) to (B);
		\draw [style=inarrow ] (a) to (C);
\end{tikzpicture}\\\\{\text{\Large(a)}}}
    \qquad\qquad
    \substack{\begin{tikzpicture}
	\begin{pgfonlayer}{nodelayer}
		\node [style=species , inner sep= 2mm] (A) at (-1.5,1){};
		\node [style=species , inner sep= 2mm] (B) at (-1.5,-1){};
		\node [style=species , inner sep= 2mm] (C) at (1.5,0){};
		\node [style=transition , inner sep= 2mm, fill=green!40] (a) at (0,0){};
	\end{pgfonlayer}

	\begin{pgfonlayer}{edgelayer}
	\end{pgfonlayer}
		\draw [style=inarrow] (A) to (a);
		\draw [style=inarrow] (B) to (a);
		\draw [style=inarrow,bend left=35,looseness=1.00] (a) to (C);
		\draw [style=inarrow,bend right=35,looseness=1.00] (a) to (C);
\end{tikzpicture}\\\\{\text{\Large(b)}}}
    \]
	\vspace{0.2in}
    \[
    \substack{\begin{tikzpicture}
	\begin{pgfonlayer}{nodelayer}
		\node [style=species , inner sep= 2mm] (A) at (-1.5,1){};
		\node [style=species , inner sep= 2mm] (B) at (-1.5,-1){};
		\node [style=species , inner sep= 2mm] (C) at (1.5,1){};
		\node [style=species , inner sep= 2mm] (D) at (1.5,-1){};
		\node [style=transition , inner sep= 2mm, fill=green!40] (a) at (0,0){};
	\end{pgfonlayer}

	\begin{pgfonlayer}{edgelayer}
	\end{pgfonlayer}
		\draw [style=inarrow] (A) to (a);
		\draw [style=inarrow] (B) to (a);
		\draw [style=inarrow] (a) to (C);
		\draw [style=inarrow] (a) to (D);
\end{tikzpicture}\\\\{\text{\Large(c)}}}
    \qquad\qquad
    \substack{\begin{tikzpicture}
	\begin{pgfonlayer}{nodelayer}
		\node [style=species , inner sep= 2mm] (A) at (-1.5,0){};
		\node [style=species , inner sep= 2mm] (B) at (-1.5,-2){};
		\node [style=species , inner sep= 2mm] (C) at (1.5,-1){};
		\node [style=species, inner sep= 2mm] (D) at (0,0){};
		\node [style=transition , inner sep= 2mm, fill=green!40] (a) at (0,-1){};
	\end{pgfonlayer}

	\begin{pgfonlayer}{edgelayer}
	\end{pgfonlayer}
		\draw [style=inarrow] (A) to (a);
		\draw [style=inarrow] (B) to (a);
		\draw [style=inarrow] (a) to (C);
\end{tikzpicture}\\\\{\text{\Large(d)}}}
    \]
 \caption{(a) An atomic Petri net containing a catalyst. (b) An atomic Petri net with a single transition that has two output arcs connected to the same species. (c) The atomic Petri net $P_{2,2}$. (d) A non-atomic Petri net with a single transition that has a species that does not participate in the transition.}
    \label{fig:atomic-examples}
\end{figure}
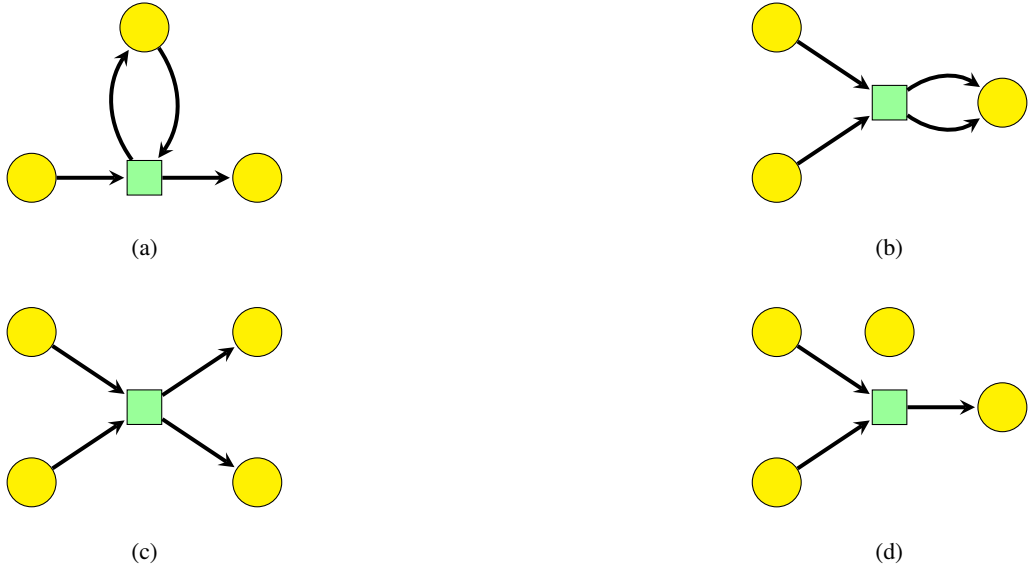

Each atomic Petri net has a type signature called a \defined{body type} which we define below. 

\begin{definition}\label{def:body-type}
Let $P = (s,t: T \to \Nb^S)$ by any Petri net. For a transition $\tau \in T$, let $S_\tau\subseteq S$ be the subset of species which participate in the transition $\tau$. So $S_\tau = \{\sigma \in S | s(\tau)(\sigma) \neq 0  \text{ or } t(\tau)(\sigma) \neq 0\}$. Then the transition $\tau \in T$ is represented by the
multiset
\[
    \{ [ (s(\tau)(\sigma), t(\tau)(\sigma)) ] | \sigma \in S_\tau \}
\]
of pairs of natural numbers $\Nn \times \Nn \setminus \{(0,0)\}$.
We refer to this multiset as the \defined{body type} of a transition $\tau$.
\end{definition}

There is a 1-1 correspondence between body types and isomorphism classes of atomic Petri nets. Indeed, consider the body type 
$\bt = [(a_1,b_1),..., (a_n, b_n)]$. Then, 
let $P_\bt$ be an atomic Petri net with a transition $\tau$ and $n$ distinct species $S = \{1,..., n\}$ with source and target maps
$
    s(\tau)(i) = a_i, \  t(\tau)(i) =  b_i.
$
The Petri net $P_\bt$  is a canonical representative of the isomorphism class of atomic Petri nets that correspond to body type $\bt$. 

\begin{remark}
    Note that $P_{m,n} = P_\bt$ for body type 
    \[
        \bt = [\underbrace{(1, 0), \cdots , (1,0)}_{m\text{-times}}, \underbrace{(0, 1) \cdots , (0, 1)}_{n \text{-times}}].
    \]
\end{remark}

\begin{definition}
    For natural numbers $m,n$, let $\Pa_{m,n}$ be the open Petri net decorated by $P_{m,n}$ and whose underlying cospan is the identity.

    Likewise for body type $\bt$, let $\Pa_\bt$ be the open Petri net decorated by $P_\bt$ and whose underlying cospan is the identity. We call an open  Petri net of this form a \defined{body net}.
\end{definition}

Figure~\ref{fig:body_net} gives an example of the body net $\Pa_{1,1}$.

\begin{remark}\label{rmk:atomic}
    If $\Pa$ is any open Petri net whose decoration has a single transition and whose underlying cospan is the identity, then $\Pa$ is isomorphic to the monoidal product of a body net $\Pa_\bt$ and an identity open Petri net.
\end{remark}

\begin{figure}[h]
    \centering
    \begin{tikzpicture}
\begin{pgfonlayer}{nodelayer}
	\node [style=species, inner sep= 2mm] (A) at (0,1) {};
	\node [style=species, inner sep= 2mm] (B) at (0,-1) {};
	\node [style=transition, inner sep= 2mm, fill=green!40] (t) at (0,0) {};

 \node [style=empty ] (X) at (-2.5, 2.3) {${\color{gray}X}$};
	\node [style=none] (Xtr) at (-2, 2) {};
	\node [style=none] (Xbr) at (-2, -2) {};
	\node [style=none] (Xtl) at (-3, 2) {};
    \node [style=none] (Xbl) at (-3, -2) {};
	\node [style=inputdot, fill = red!40, inner sep= 2mm] (1) at (-2.5,1) {};
	\node [style=inputdot, fill = red!40, inner sep= 2mm] (2) at (-2.5,-1) {};

 \node [style=empty] (Y) at (2.5, 2.3) {${\color{gray} X}$};
	\node [style=none] (Ytr) at (3, 2) {};
	\node [style=none] (Ybr) at (3, -2) {};
	\node [style=none] (Ytl) at (2, 2) {};
    \node [style=none] (Ybl) at (2, -2) {};
	\node [style=inputdot, fill = red!40, inner sep= 2mm] (3) at (2.5,1) {};
	\node [style=inputdot, fill = red!40, inner sep= 2mm] (4) at (2.5,-1) {};

\end{pgfonlayer}

\begin{pgfonlayer}{edgelayer}
	\draw [style=simple] (Xtl.center) to (Xtr.center);
	\draw [style=simple] (Xtr.center) to (Xbr.center);
	\draw [style=simple] (Xbr.center) to (Xbl.center);
	\draw [style=simple] (Xbl.center) to (Xtl.center);

	\draw [style=simple] (Ytl.center) to (Ytr.center);
	\draw [style=simple] (Ytr.center) to (Ybr.center);
	\draw [style=simple] (Ybr.center) to (Ybl.center);
	\draw [style=simple] (Ybl.center) to (Ytl.center);

	\draw [style=inputarrow] (1) to (A);
	\draw [style=inputarrow] (2) to (B);

	\draw [style=inputarrow] (3) to (A);
	\draw [style=inputarrow] (4) to (B);

	\draw [style=inarrow] (A) to (t);
	\draw [style=inarrow] (t) to (B);
\end{pgfonlayer}
\end{tikzpicture}
    \caption{The open atomic Petri net $\Pa_{1,1}$.}
    \label{fig:body_net}
\end{figure}
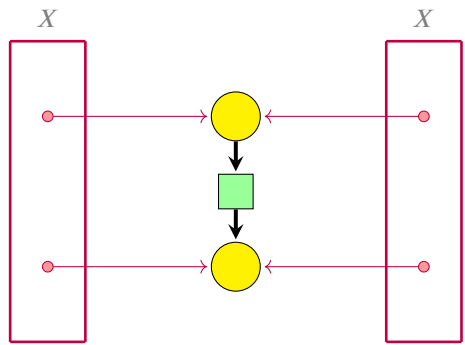

\section{Decomposition Lemmas}\label{sec:decomposition}
Petri nets in the wild are often quite complicated, with hundreds of transitions.
In this section we first prove a decomposition theorem for open Petri nets which decomposes an open Petri
net into  transitionless and single-transition open Petri nets. One of the main advantages of this  decomposition theorem is that it applies to mope nets as well as generic open Petri nets.
Then we will prove Lemma~\ref{lemma:decomp} which gives a canonical decomposition of open Petri nets into transitionless open Petri nets and open Petri nets decorated with Petri nets of the form $P_{m,n}$. This factorization involves both composition and the monoidal product.

We start first by showing that any open Petri net is canonically the composite of   transitionless open Petri nets and an open Petri net whose underlying cospan is the identity.

\begin{lemma}\label{lem:extract-net}
Let $\mathcal{P}$ be an open Petri net. Then $\mathcal{P}$ can be decomposed as 
\[ 
    \mathcal{P} = \mathcal{Q} \cp \mathcal{R} \cp   \mathcal{Q'},
\]
where $\mathcal{Q}$ and $\mathcal{Q'}$ are decorated by transitionless Petri nets and the legs of $\mathcal{R}$ are  identities. 
If $\mathcal P$ is monic, then so are $\mathcal Q$ and $\mathcal Q'$.
\end{lemma}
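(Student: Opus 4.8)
The plan is to take an open Petri net $\Pa = (X \xrightarrow{i} S \xleftarrow{o} Y, \; (s,t\colon T \to \Nb^S))$ and peel off the interface maps into separate transitionless factors, leaving a middle piece whose legs are identities but which carries all the transitions. Concretely, I would set $\Ra \coloneqq (S \xrightarrow{1_S} S \xleftarrow{1_S} S, \; (s,t\colon T \to \Nb^S))$, i.e.\ the same decorating Petri net $P$ but with identity cospan on the whole species set $S$. Then I would set $\Qa \coloneqq (X \xrightarrow{i} S \xleftarrow{1_S} S, \; 0_S)$ and $\Qa' \coloneqq (S \xrightarrow{1_S} S \xleftarrow{o} Y, \; 0_S)$, both decorated by the transitionless net $0_S$ on $S$ species. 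The claim is then $\Pa = \Qa \cp \Ra \cp \Qa'$.

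The key step is to verify this composite directly against the explicit formulas for composition given in the preliminaries. When we compose $\Qa \cp \Ra$, the pushout is of $S \xleftarrow{1_S} S \xrightarrow{1_S} S$, which is just $S$ with both canonical maps equal to $1_S$; hence the composite has species set $S$, cospan $X \xrightarrow{i} S \xleftarrow{1_S} S$, transition set $0 + T = T$, and by \eqref{eq:composite-source}--\eqref{eq:composite-target} (the sums collapse because $1_S$ is injective, so the fibre over each $\bar\sigma$ is a singleton) source and target maps exactly $s,t$. Composing this with $\Qa'$ is entirely symmetric: the pushout of $S \xleftarrow{1_S} S \xrightarrow{1_S} S$ is again $S$, the transition set becomes $T + 0 = T$, the cospan becomes $X \xrightarrow{i} S \xleftarrow{o} Y$, and the source/target maps are again unchanged. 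So the composite is $\Pa$ on the nose (up to the evident canonical isomorphism identifying $T+0$, $0+T$ with $T$ and the iterated pushouts with $S$), which is all that is needed since morphisms of $\openpetri$ are isomorphism classes.

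For the monic addendum: if $\Pa$ is a mope net then $i$ and $o$ are monic. In $\Qa$ the legs are $i$ (monic by hypothesis) and $1_S$ (always monic); in $\Ra$ both legs are $1_S$; in $\Qa'$ they are $1_S$ and $o$ (monic by hypothesis). Hence $\Qa$, $\Ra$, $\Qa'$ are all mope nets, and in particular $\Qa, \Qa'$ are monic as claimed.

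I do not anticipate a serious obstacle here — the proof is essentially a bookkeeping exercise matching the definitions. The one place to be careful is the identification of the pushout object: one must note that a pushout of an identity span is (canonically) the apex again, and that under this identification the canonical injections $\overline{i'}, \overline{o}$ become identities, so that the fibre sums in \eqref{eq:composite-source} and \eqref{eq:composite-target} each reduce to a single term and the formulas return $s,t$ verbatim. A remark that the decomposition is "canonical" in the sense that $\Ra$ is forced to be $P$ with identity legs and $\Qa,\Qa'$ are forced to be $0_S$ with the original interface maps can be added, but is not needed for the statement.
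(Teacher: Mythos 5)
Your proposal is correct and coincides with the paper's own proof: the paper chooses exactly the same three factors $\Qa = (X \xrightarrow{i} S \xleftarrow{1_S} S, 0_S)$, $\Ra = (S \xrightarrow{1_S} S \xleftarrow{1_S} S, P)$, $\Qa' = (S \xrightarrow{1_S} S \xleftarrow{o} Y, 0_S)$ and verifies the composite via the pushout of identity spans, noting the monic case in passing. Your verification of the decoration via the fibre-sum formulas is if anything slightly more explicit than the paper's.
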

\begin{proof}
Let $\mathcal{P}$ be the  Petri net defined by
\[
    \mathcal{P} = \left(X \xrightarrow{i} S \xleftarrow{o} Y, P=\left(S,T, s,t \colon T \to \mathbb{N}^S\right)\right).
\]
Consider the  Petri nets 
\[
    \mathcal{Q} =\left(X \xrightarrow{i} S \xleftarrow{1_S} S, 0_S\right), 
\]  
\[
    \mathcal{R} = \left(S \xrightarrow{1_S} S \xleftarrow{1_S} S, P\right),
\] and 
\[
    \mathcal{Q}' = \left(S \xrightarrow{1_S} S \xleftarrow{o} Y, 0_S\right).
\]
Note that if $\mathcal P$ is monic, then so are $\mathcal Q$ and $\mathcal Q'$.

First, we show that $\Qa \cp \Ra = (X \xrightarrow{i} S \xleftarrow{1_S} S, P)$. Consider the composition of cospans:

\[\begin{tikzcd}
	&& S \\
	& S && S \\
	X && S && S
	\arrow["i", from=3-1, to=2-2]
	\arrow["{1_S}"', from=3-3, to=2-2]
	\arrow["{1_S}", from=3-3, to=2-4]
	\arrow["{1_S}"', from=3-5, to=2-4]
	\arrow["{1_S}", dashed, from=2-2, to=1-3]
	\arrow["{1_S}"', dashed, from=2-4, to=1-3]
	\arrow["\lrcorner"{anchor=center, pos=0.125, rotate=-45, scale = 2}, draw=none, from=1-3, to=3-3]
    \end{tikzcd}\]

That the decoration is $P$ follows straightforwardly from the explicit definition of the composition of open Petri nets given in Section~\ref{sec:prelim}.

Similarly $(\mathcal{Q} \cp \mathcal{R}) \cp  \mathcal{Q'}$ consists of the cospan $X \xrightarrow{i} S \xleftarrow{o} Y$ and the decoration $P$.

\end{proof}

We are now ready to prove 
our first decomposition Lemma of open Petri nets into atomic factors.

\begin{lemma}\label{lemma:decomp-atomic}
If $\Pa$ is an open Petri net with $N$ transitions, then 
\[
    \Pa = \mathcal{Q}\cp \mathcal{G}_{1} \cp \mathcal{G}_{2} \cp \cdots \cp \mathcal{G}_{N} \cp \mathcal{Q}\pr,
\]
where $\mathcal{Q}$ and $\mathcal{Q\pr}$ are transitionless open Petri nets and each $\mathcal{G}_{i}$ satisfies: 
\begin{itemize}
    \item Its underlying cospan is the identity.
    \item Its decoration has a single transition.
\end{itemize}
\end{lemma}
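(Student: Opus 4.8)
The plan is to build on Lemma~\ref{lem:extract-net}, which already reduces an open Petri net $\Pa$ to the form $\Qa \cp \Ra \cp \Qa'$ where $\Qa, \Qa'$ are transitionless and the legs of $\Ra$ are identities. So it suffices to show that an open Petri net $\Ra = (S \xrightarrow{1_S} S \xleftarrow{1_S} S, P)$ whose decoration $P$ has $N$ transitions factors as a composite $\Ga_1 \cp \Ga_2 \cp \cdots \cp \Ga_N$ of open Petri nets, each with identity legs and each decorated by a single-transition Petri net. Together with absorbing $\Qa$ and $\Qa'$ from the outer decomposition, this yields the claimed statement (noting also that one can identity-pad so the intermediate species sets match up).

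First I would order the transitions of $P$ as $\tau_1, \dots, \tau_N$. For each $i$, let $\Ga_i$ be the open Petri net with underlying cospan $S \xrightarrow{1_S} S \xleftarrow{1_S} S$ decorated by the single-transition Petri net $(S, \{\tau_i\}, s|_{\{\tau_i\}}, t|_{\{\tau_i\}})$ — i.e.\ the Petri net on the same species set $S$ retaining only the transition $\tau_i$ with its original source and target vectors. The key computation is then to verify that $\Ga_1 \cp \cdots \cp \Ga_N = \Ra$. Because every leg involved is $1_S$, each pushout $S +_S S$ in the composition formula collapses to $S$, the canonical maps $\overline{i'}$ and $\overline{o}$ are both $1_S$, and the piecewise sums in equations~\eqref{eq:composite-source} and~\eqref{eq:composite-target} degenerate to the identity reindexing. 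Hence the composite has species set $S$, transition set $\{\tau_1\} + \cdots + \{\tau_N\} \cong T$, and source/target maps that restrict to the original $s, t$ on each $\tau_i$; this is exactly $\Ra$. I would prove this by induction on $N$: the base case $N=1$ is immediate, and the inductive step composes $\Ga_1 \cp \cdots \cp \Ga_{N-1}$ (which by hypothesis equals the open Petri net on $S$ with transitions $\tau_1, \dots, \tau_{N-1}$) with $\Ga_N$, again using that all legs are $1_S$.

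Finally, I would assemble the pieces: by Lemma~\ref{lem:extract-net}, $\Pa = \Qa \cp \Ra \cp \Qa'$ with $\Qa, \Qa'$ transitionless, and by the above $\Ra = \Ga_1 \cp \cdots \cp \Ga_N$, so $\Pa = \Qa \cp \Ga_1 \cp \cdots \cp \Ga_N \cp \Qa'$, which is the desired form since each $\Ga_i$ has identity underlying cospan and single-transition decoration. (If $N = 0$ the statement degenerates and $\Pa = \Qa \cp \Qa'$ suffices, or one inserts nothing between them.)

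I expect the main obstacle to be purely bookkeeping rather than conceptual: carefully checking that the composition formula~\eqref{eq:composite-source}--\eqref{eq:composite-target} really does collapse as claimed when all legs are identities, and being precise about the isomorphism $\{\tau_1\} + \cdots + \{\tau_N\} \cong T$ so that the decorations genuinely agree on the nose (or rather up to the isomorphism class, since morphisms in $\openpetri$ are iso-classes of open Petri nets). A secondary subtlety is that $\Ga_i$ is \emph{not} of the form $\Pa_\bt \oplus \text{id}$ with $\bt$ an atomic body type unless the single transition $\tau_i$ touches every species in $S$; but the lemma as stated only requires the decoration to have a single transition, so this is fine — the sharper atomic decomposition is deferred to Lemma~\ref{lemma:decomp}.
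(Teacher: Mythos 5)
Your proposal is correct and follows essentially the same route as the paper's proof: reduce to the identity-cospan case via Lemma~\ref{lem:extract-net}, split the transition set into singletons over the same species set $S$, and induct on $N$ using the fact that composition along identity legs just forms the coproduct of transition sets while preserving the source and target maps. The only cosmetic difference is that you define all the $\Ga_i$ up front and verify the full composite, whereas the paper peels off one transition per inductive step; your closing remark that the $\Ga_i$ need not be atomic is also consistent with how the paper defers that refinement to Lemma~\ref{lemma:decomp}.
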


\begin{proof}
By Lemma~\ref{lem:extract-net} is suffices to prove the theorem for open Petri nets whose underlying cospan is the identity. We do this by induction on $N$.  Suppose $\Pa$ is of the form
$$\mathcal{P} = (S \xrightarrow{1_S} S \xleftarrow{1_S} S, P=(S,\ T,\ s,\ t\colon\ T \to \mathbb{N}^S)),$$
where $T$ has elements $\tau_1, \tau_2, \ldots, \tau_N$.

If $N=0$ or $N=1$ then the result is trivial. Now suppose that $N \geq 2$ and that the result holds for all open Petri nets with fewer than $N$ transitions.

Define $\tilde T = \{\tau_1,..., \tau_{N-1}\}$ and the Petri nets 
\[ \tilde P = (S, \tilde T, s\rest_{\tilde T}, t\rest_{\tilde T}), \quad G_N = (S, \{\tau_N\}, s\rest_{\{\tau_N\}}, t\rest_{\{\tau_N\}}).
\] Let $\Ga_N$ be the open Petri net $(S \xrightarrow{1_S} S \xleftarrow{1_S} S, G_N)$. By construction $\Ga_N$ satisfies the requisite criteria. Then $\Pa = (S\xrightarrow{1_S} S \xleftarrow{1_S} S, \tilde P) \cp \Ga_N$. This result is a straightforward application of the definition of composing open Petri nets. 

By the induction hypothesis there exist  open Petri nets $\Ga_1, \cdots , \Ga_{N-1}$ satisfying the criteria such that 
\[
    (S\xrightarrow{1_S} S \xleftarrow{1_S} S, \tilde P) = \Ga_1 \cp \cdots \cp \Ga_{N - 1}.
\] Thus $\Pa = \Ga_1 \cp \cdots \cp \Ga_{N - 1} \cp \Ga_N$.
\end{proof}

\begin{remark}\label{rmk:decomp-order}
    The order of the Petri nets $\Ga_i$ depended on the choice of ordering of the transitions. Therefore, this decomposition is unique up to isomorphism and permutations of $\Ga_i$. Since composition involves pushouts, everything is unique up to isomorphism.
\end{remark}

\begin{remark}
    By Lemma~\ref{lem:extract-net}, if $\Pa$ is a mope net then its factors according to the decomposition in Lemma~\ref{lemma:decomp-atomic} are mope nets as well.
\end{remark}

\begin{remark}
    By Remark~\ref{rmk:atomic}, each $\Ga_i$ is the monoidal product of a body net and an identity open Petri net. 
\end{remark}

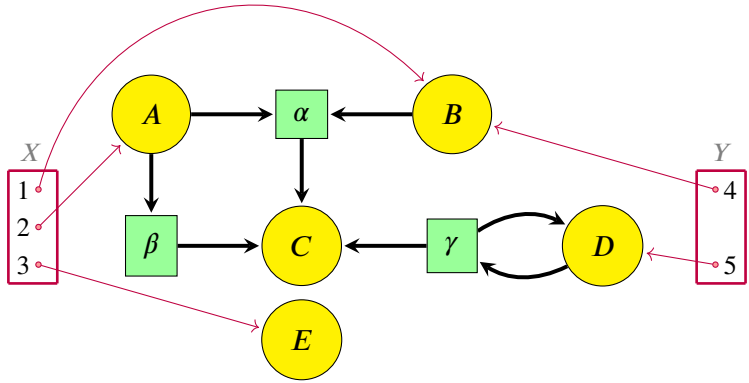
\begin{figure}[h]
    \centering
    \[
    \begin{tikzpicture}
	\begin{pgfonlayer}{nodelayer}
		\node [style=species, inner sep= 2mm] (A) at (-3.5, 1.5) {$A$};
		\node [style=species, inner sep= 2mm] (B) at (0.5, 1.5) {$B$};
		\node [style=species, inner sep= 2mm] (C) at (-1.5, -0.25) {$C$};
		\node [style=species, inner sep= 2mm] (D) at (2.5, -0.25) {$D$};
		\node [style=species, inner sep= 2mm] (E) at (-1.5,-1.5) {$E$};
        
        \node [style=transition, inner sep= 2mm, fill=green!40] (a) at (-1.5, 1.5) {$\alpha$};
        \node [style=transition, inner sep= 2mm, fill=green!40] (b) at (-3.5, -0.25) {$\beta$};
        \node [style=transition, inner sep= 2mm, fill=green!40] (c) at (0.5, -0.25) {$\gamma$}; 
		
		\node [style=empty] (X) at (-5.1, 1) {${\color{gray}X}$};
		\node [style=none] (Xtr) at (-4.75, 0.75) {};
		\node [style=none] (Xbr) at (-4.75, -0.75) {};
		\node [style=none] (Xtl) at (-5.4, 0.75) {};
        \node [style=none] (Xbl) at (-5.4, -0.75) {};
	
		\node [style=inputdot,fill = red!40, inner sep= 1mm] (1) at (-5, 0.5) {};
		\node [style=empty] at (-5.2, 0.5) {$1$};
		\node [style=inputdot, fill = red!40, inner sep= 1mm] (2) at (-5, 0) {};
		\node [style=empty] at (-5.2, 0) {$2$};
		\node [style=inputdot, fill = red!40, inner sep= 1mm] (3) at (-5, -0.5) {};
		\node [style=empty] at (-5.2, -0.5) {$3$};

		\node [style=empty] (Y) at (4.1, 1) {${\color{gray}Y}$};
		\node [style=none] (Ytr) at (4.4, 0.75) {};
		\node [style=none] (Ytl) at (3.75, 0.75) {};
		\node [style=none] (Ybr) at (4.4, -0.75) {};
		\node [style=none] (Ybl) at (3.75, -0.75) {};

		\node [style=inputdot, fill = red!40, inner sep= 1mm] (4) at (4, 0.5) {};
		\node [style=empty] at (4.2, 0.5) {$4$};
		\node [style=inputdot, fill = red!40, inner sep= 1mm] (5) at (4, -0.5) {};
		\node [style=empty] at (4.2, -0.5) {$5$};		
		
		
	\end{pgfonlayer}
	\begin{pgfonlayer}{edgelayer}
		\draw [style=inarrow] (A) to (a);
        \draw [style=inarrow] (A) to (b);
		\draw [style=inarrow] (B) to (a);
        \draw [style=inarrow,bend left=30, looseness=1.00] (D) to (c);
		\draw [style=inarrow] (a) to (C);
        \draw [style=inarrow] (b) to (C);
        \draw [style=inarrow] (c) to (C);
        \draw [style=inarrow,bend left=30, looseness=1.00] (c) to (D);

		\draw [style=inputarrow,bend left=60, looseness=1.25] (1) to (B);
		\draw [style=inputarrow] (2) to (A);
		\draw [style=inputarrow] (3) to (E);
		\draw [style=inputarrow] (4) to (B);
        \draw [style=inputarrow] (5) to (D);
		\draw [style=simple] (Xtl.center) to (Xtr.center);
		\draw [style=simple] (Xtr.center) to (Xbr.center);
		\draw [style=simple] (Xbr.center) to (Xbl.center);
		\draw [style=simple] (Xbl.center) to (Xtl.center);
		\draw [style=simple] (Ytl.center) to (Ytr.center);
		\draw [style=simple] (Ytr.center) to (Ybr.center);
		\draw [style=simple] (Ybr.center) to (Ybl.center);
		\draw [style=simple] (Ybl.center) to (Ytl.center);
	\end{pgfonlayer}
\end{tikzpicture}
\]
    \caption{A mope net with three transitions. 
    }
    \label{fig:example-mope}
\end{figure} 
\begin{example}
Consider the mope net $\mathcal{P} = \left(X \xrightarrow{i} S \xleftarrow{o} Y, P= \left(S,T,s,t\colon T \to \mathbb{N}^S\right)\right)$, where $S=\{ A,B,C,D,E\}$ and $T = \{ \alpha,\beta,\gamma \}$ shown in Figure~\ref{fig:example-mope}. Let 
\[
\begin{split}
    \mathcal{Q} & = \left(X \xrightarrow{i} S \xleftarrow{1_S} S, 0_S\right), \text{ and}\\
    \mathcal{Q'} & = \left(S \xrightarrow{1_S} S \xleftarrow{o} Y, 0_S\right).\\
    \end{split}
\]
For $i=1,2,3$, let 
\[
    \mathcal{G}_i= \left(S \xrightarrow{1_S} S \xleftarrow{1_S} S, P_i = \left(S, T_i,s\rest_{T_i},t\rest_{T_i}\colon T_i \to \mathbb{N}^S\right)\right),
\] where $T_{1} = \{\alpha\}, T_{2} = \{\beta\}$, and $T_{3}=\{\gamma\}$. Using Lemma~\ref{lemma:decomp-atomic},  $\mathcal{P}$ can be decomposed as follows 
\[
    \mathcal{P} = \mathcal{Q} \cp \mathcal{G}_1 \cp \mathcal{G}_2 \cp \mathcal{G}_3 \cp \mathcal{Q'}.
\]
See Figure~\ref{fig:example-mdecomposition} for a depiction of this decomposition.

As an example of Remark~\ref{rmk:decomp-order}, this decomposition does not depend on the order of $\Ga_1, \Ga_2, \Ga_3$. Composing them in a permuted order yields the same composite mope net.

\end{example}

\begin{figure}[h]
    \centering
    \resizebox{1\textwidth}{!}{%
    \begin{tikzpicture}[scale=0.6]
\begin{pgfonlayer}{nodelayer}
	\node [style=empty] (X1) at (-15.1, 1) {};
	\node [style=none] (X1tr) at (-15.75, 4.25) {};
	\node [style=none] (X1br) at (-15.75, -4.25) {};
	\node [style=none] (X1tl) at (-16.25, 4.25) {};
    \node [style=none] (X1bl) at (-16.25, -4.25) {};

	\node [style=species] (A1) at (-14,4) {$A$};
	\node [style=species] (B1) at (-14,2) {$B$};
	\node [style=species] (C1) at (-14,0) {$C$};
	\node [style=species] (D1) at (-14,-2) {$D$};
	\node [style=species] (E1) at (-14,-4) {$E$};
	
	\node [style=inputdot, fill = red!40, inner sep= 1mm] (1) at (-16, 4) {};
	\node [style=inputdot, fill = red!40, inner sep= 1mm] (2) at (-16, 2) {};
	\node [style=inputdot, fill = red!40, inner sep= 1mm] (3) at (-16, -2) {};

	\node [style=empty] (X2) at (-11.1, 1) {};
	\node [style=none] (X2tr) at (-11.75, 4.25) {};
	\node [style=none] (X2br) at (-11.75, -4.25) {};
	\node [style=none] (X2tl) at (-12.25, 4.25) {};
    \node [style=none] (X2bl) at (-12.25, -4.25) {};

	\node [style=inputdot, fill = red!40, inner sep= 1mm] (5) at (-12, 4) {};
	\node [style=inputdot, fill = red!40, inner sep= 1mm] (6) at (-12, 2) {};
	\node [style=inputdot, fill = red!40, inner sep= 1mm] (7) at (-12, 0) {};
	\node [style=inputdot, fill = red!40, inner sep= 1mm] (8) at (-12, -2) {};
	\node [style=inputdot, fill = red!40, inner sep= 1mm] (9) at (-12, -4) {};

	\node [style=species] (A2) at (-10,4) {$A$};
	\node [style=species] (B2) at (-6,4) {$B$};
	\node [style=species] (C2) at (-8,0) {$C$};
	\node [style=species] (D2) at (-8,-2) {$D$};
	\node [style=species] (E2) at (-8,-4) {$E$};

	\node [style=transition, inner sep= 2mm, fill=green!40] (a) at (-8,2) {$\alpha$};

	\node [style=empty] (X3) at (-4.1, 1) {};
	\node [style=none] (X3tr) at (-3.75, 4.25) {};
	\node [style=none] (X3br) at (-3.75, -4.25) {};
	\node [style=none] (X3tl) at (-4.25, 4.25) {};
    \node [style=none] (X3bl) at (-4.25, -4.25) {};

	\node [style=inputdot, fill = red!40, inner sep= 1mm] (10) at (-4, 4) {};
	\node [style=inputdot, fill = red!40, inner sep= 1mm] (11) at (-4, 2) {};
	\node [style=inputdot, fill = red!40, inner sep= 1mm] (12) at (-4, 0) {};
	\node [style=inputdot, fill = red!40, inner sep= 1mm] (13) at (-4, -2) {};
	\node [style=inputdot, fill = red!40, inner sep= 1mm] (14) at (-4, -4) {};	

	\node [style=species] (A3) at (-2,4) {$A$};
	\node [style=species] (B3) at (2,4) {$B$};
	\node [style=species] (C3) at (0,0) {$C$};
	\node [style=species] (D3) at (0,-2) {$D$};
	\node [style=species] (E3) at (0,-4) {$E$};

	\node [style=transition, inner sep= 2mm, fill=green!40] (b) at (0,2) {$\beta$};

	\node [style=empty] (X4) at (3.9, 1) {};
	\node [style=none] (X4tr) at (4.25, 4.25) {};
	\node [style=none] (X4br) at (4.25, -4.25) {};
	\node [style=none] (X4tl) at (3.75, 4.25) {};
    \node [style=none] (X4bl) at (3.75, -4.25) {};

	\node [style=inputdot, fill = red!40, inner sep= 1mm] (15) at (4, 4) {};
	\node [style=inputdot, fill = red!40, inner sep= 1mm] (16) at (4, 2) {};
	\node [style=inputdot, fill = red!40, inner sep= 1mm] (17) at (4, 0) {};
	\node [style=inputdot, fill = red!40, inner sep= 1mm] (18) at (4, -2) {};
	\node [style=inputdot, fill = red!40, inner sep= 1mm] (19) at (4, -4) {};	

	\node [style=species] (A4) at (6,4) {$A$};
	\node [style=species] (B4) at (10,4) {$B$};
	\node [style=species] (C4) at (8,2) {$C$};
	\node [style=species] (D4) at (8,-2) {$D$};
	\node [style=species] (E4) at (8,-4) {$E$};

	\node [style=transition, inner sep= 2mm, fill=green!40] (c) at (8,0) {$\gamma$};

	\node [style=empty] (X5) at (12.9, 1) {};
	\node [style=none] (X5tr) at (12.25, 4.25) {};
	\node [style=none] (X5br) at (12.25, -4.25) {};
	\node [style=none] (X5tl) at (11.75, 4.25) {};
    \node [style=none] (X5bl) at (11.75, -4.25) {};

	\node [style=inputdot, fill = red!40, inner sep= 1mm] (20) at (12, 4) {};
	\node [style=inputdot, fill = red!40, inner sep= 1mm] (21) at (12, 2) {};
	\node [style=inputdot, fill = red!40, inner sep= 1mm] (22) at (12, 0) {};
	\node [style=inputdot, fill = red!40, inner sep= 1mm] (23) at (12, -2) {};
	\node [style=inputdot, fill = red!40, inner sep= 1mm] (24) at (12, -4) {};

	\node [style=species] (A5) at (14,4) {$A$};
	\node [style=species] (B5) at (14,2) {$B$};
	\node [style=species] (C5) at (14,0) {$C$};
	\node [style=species] (D5) at (14,-2) {$D$};
	\node [style=species] (E5) at (14,-4) {$E$};

	\node [style=empty] (X6) at (12.9, 1) {};
	\node [style=none] (X6tr) at (16.25, 4.25) {};
	\node [style=none] (X6br) at (16.25, -4.25) {};
	\node [style=none] (X6tl) at (15.75, 4.25) {};
    \node [style=none] (X6bl) at (15.75, -4.25) {};
	
	\node [style=inputdot, fill = red!40, inner sep= 1mm] (25) at (16, 2) {};
	\node [style=inputdot, fill = red!40, inner sep= 1mm] (26) at (16, -2) {};

\end{pgfonlayer}

\begin{pgfonlayer}{edgelayer}
	\draw [style=simple] (X1tl.center) to (X1tr.center);
	\draw [style=simple] (X1tr.center) to (X1br.center);
	\draw [style=simple] (X1br.center) to (X1bl.center);
	\draw [style=simple] (X1bl.center) to (X1tl.center);

	\draw [style=simple] (X2tl.center) to (X2tr.center);
	\draw [style=simple] (X2tr.center) to (X2br.center);
	\draw [style=simple] (X2br.center) to (X2bl.center);
	\draw [style=simple] (X2bl.center) to (X2tl.center);

	\draw [style=simple] (X3tl.center) to (X3tr.center);
	\draw [style=simple] (X3tr.center) to (X3br.center);
	\draw [style=simple] (X3br.center) to (X3bl.center);
	\draw [style=simple] (X3bl.center) to (X3tl.center);

	\draw [style=simple] (X4tl.center) to (X4tr.center);
	\draw [style=simple] (X4tr.center) to (X4br.center);
	\draw [style=simple] (X4br.center) to (X4bl.center);
	\draw [style=simple] (X4bl.center) to (X4tl.center);

	\draw [style=simple] (X5tl.center) to (X5tr.center);
	\draw [style=simple] (X5tr.center) to (X5br.center);
	\draw [style=simple] (X5br.center) to (X5bl.center);
	\draw [style=simple] (X5bl.center) to (X5tl.center);

	\draw [style=simple] (X6tl.center) to (X6tr.center);
	\draw [style=simple] (X6tr.center) to (X6br.center);
	\draw [style=simple] (X6br.center) to (X6bl.center);
	\draw [style=simple] (X6bl.center) to (X6tl.center);

	\draw [style=inputarrow] (1) to (B1);
	\draw [style=inputarrow] (2) to (A1);
	\draw [style=inputarrow] (3) to (E1);

	\draw [style=inputarrow] (5) to (A1);
	\draw [style=inputarrow] (6) to (B1);
	\draw [style=inputarrow] (7) to (C1);
	\draw [style=inputarrow] (8) to (D1);
	\draw [style=inputarrow] (9) to (E1);

	\draw [style=inputarrow] (5) to (A2);
	\draw [style=inputarrow] (6) to (B2);
	\draw [style=inputarrow] (7) to (C2);
	\draw [style=inputarrow] (8) to (D2);
	\draw [style=inputarrow] (9) to (E2);

	\draw [style=inputarrow, bend right=35, looseness=1.00] (10) to (A2);
	\draw [style=inputarrow] (11) to (B2);
	\draw [style=inputarrow] (12) to (C2);
	\draw [style=inputarrow] (13) to (D2);
	\draw [style=inputarrow] (14) to (E2);

	\draw [style=inputarrow] (10) to (A3);
	\draw [style=inputarrow] (11) to (B3);
	\draw [style=inputarrow] (12) to (C3);
	\draw [style=inputarrow] (13) to (D3);
	\draw [style=inputarrow] (14) to (E3);

	\draw [style=inputarrow, bend right=35, looseness=1.00] (15) to (A3);
	\draw [style=inputarrow] (16) to (B3);
	\draw [style=inputarrow] (17) to (C3);
	\draw [style=inputarrow] (18) to (D3);
	\draw [style=inputarrow] (19) to (E3);

	\draw [style=inputarrow] (15) to (A4);
	\draw [style=inputarrow] (16) to (B4);
	\draw [style=inputarrow] (17) to (C4);
	\draw [style=inputarrow] (18) to (D4);
	\draw [style=inputarrow] (19) to (E4);

	\draw [style=inputarrow, bend right=35, looseness=1.00] (20) to (A4);
	\draw [style=inputarrow] (21) to (B4);
	\draw [style=inputarrow] (22) to (C4);
	\draw [style=inputarrow] (23) to (D4);
	\draw [style=inputarrow] (24) to (E4);

	\draw [style=inputarrow] (20) to (A5);
	\draw [style=inputarrow] (21) to (B5);
	\draw [style=inputarrow] (22) to (C5);
	\draw [style=inputarrow] (23) to (D5);
	\draw [style=inputarrow] (24) to (E5);

	\draw [style=inputarrow] (25) to (B5);
	\draw [style=inputarrow] (26) to (D5);

	\draw [style=inarrow] (A2) to (a);
	\draw [style=inarrow] (B2) to (a);
    \draw [style=inarrow] (a) to (C2);

	\draw [style=inarrow] (A3) to (b);
	\draw [style=inarrow] (b) to (C3);
	\draw [style=inarrow, bend right=35, looseness=1.00] (c) to (D4);
	\draw [style=inarrow, bend right=35, looseness=1.00] (D4) to (c);
	\draw [style=inarrow] (c) to (C4);
\end{pgfonlayer}
\end{tikzpicture}
}
    \caption{The decomposition of the mope net depicted in Figure~\ref{fig:example-mope} into transitionless mope nets $\Qa$ and $\Qa'$ (farthest left and farthest right) and single-transition  mope nets $\Ga_1$, $\Ga_2$, and $\Ga_3$ (middle mope nets). 
    }
    \label{fig:example-mdecomposition}
\end{figure}
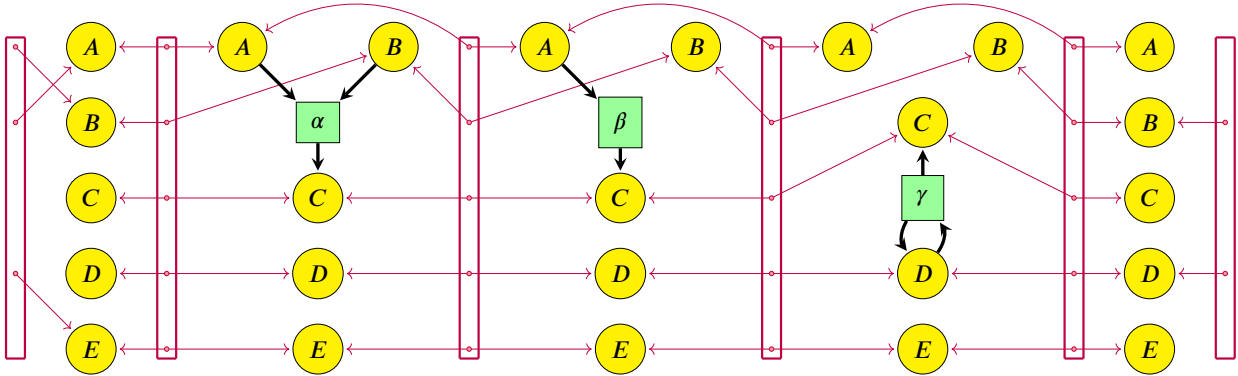

Next we state and prove our secound decomposition Lemma of open Petri nets into $P_{n,m}$ factors.

\begin{lemma}
\label{lemma:decomp}
Any open Petri net $\mathcal{P}$ can be factored as 
\[
    \mathcal{Q}\cp\left(\mathcal{G}_0 \oplus \mathcal{G}_{1}\oplus \mathcal{G}_{2}\oplus \cdots \oplus \mathcal{G}_{N}\right)\cp \mathcal{Q}',
\] where:
\begin{itemize}
    \item $\mathcal{Q}$ and $\mathcal{Q}'$ are transitionless open Petri nets,
    \item $\mathcal{G}_0$ is an identity morphism in $\openpetri$, and
    \item For $i = 1,\cdots, N$, $\mathcal{G}_{i}$ is a body net decorated with an atomic Petri net isomorphic to $P_{m_i,n_i}$.
\end{itemize} 
\end{lemma}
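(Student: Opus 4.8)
The plan is to build directly on Lemma~\ref{lem:extract-net}. That lemma already writes an arbitrary open Petri net $\mathcal{P}$ as $\mathcal{Q}_L \cp \mathcal{R} \cp \mathcal{Q}_R$, where $\mathcal{Q}_L$ and $\mathcal{Q}_R$ are transitionless and $\mathcal{R} = (S \xrightarrow{1_S} S \xleftarrow{1_S} S,\ P)$ for the decorating Petri net $P = (S,T,s,t)$ with $T = \{\tau_1,\dots,\tau_N\}$. Since composites of transitionless open Petri nets are again transitionless, it is enough to exhibit $\mathcal{R}$ itself in the required shape $\mathcal{Q} \cp (\mathcal{G}_0 \oplus \mathcal{G}_1 \oplus \cdots \oplus \mathcal{G}_N) \cp \mathcal{Q}'$ with $\mathcal{Q},\mathcal{Q}'$ transitionless; pre-composing the resulting $\mathcal{Q}$ with $\mathcal{Q}_L$ and post-composing $\mathcal{Q}'$ with $\mathcal{Q}_R$ then gives the statement for $\mathcal{P}$, because composites of transitionless nets are transitionless.

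For each transition $\tau_i$, let $m_i := \sum_{\sigma \in S} s(\tau_i)(\sigma)$ and $n_i := \sum_{\sigma \in S} t(\tau_i)(\sigma)$ be the total numbers of input, resp.\ output, arcs at $\tau_i$, and take $\mathcal{G}_i := \mathcal{P}_{m_i,n_i}$, a body net on the species set $W_i = \{1,\dots,m_i+n_i\}$. Set $\mathcal{G}_0 := \id_S$, with species set $W_0 := S$, and let $W := W_0 + W_1 + \cdots + W_N$. The heart of the construction is a map $p\colon W \to S$ which is the identity on $W_0$ and which, on each $W_i$ with $i \ge 1$, sends the input species $1,\dots,m_i$ of $P_{m_i,n_i}$ onto $S$ so that exactly $s(\tau_i)(\sigma)$ of them land on each $\sigma$, and the output species $m_i+1,\dots,m_i+n_i$ onto $S$ so that exactly $t(\tau_i)(\sigma)$ of them land on each $\sigma$; such a $p$ exists precisely because these multiplicities sum to $m_i$ and $n_i$ respectively. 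Now put $\mathcal{Q} := (S \xrightarrow{1_S} S \xleftarrow{p} W,\ 0_S)$ and $\mathcal{Q}' := (W \xrightarrow{p} S \xleftarrow{1_S} S,\ 0_S)$, both transitionless.

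It then remains to verify that $\mathcal{Q} \cp (\mathcal{G}_0 \oplus \cdots \oplus \mathcal{G}_N) \cp \mathcal{Q}' = \mathcal{R}$, which is a direct unwinding of the composition formulas~\eqref{eq:composite-source} and~\eqref{eq:composite-target}. The middle factor $\mathcal{G}_0 \oplus \cdots \oplus \mathcal{G}_N$ has underlying cospan $W \xrightarrow{1_W} W \xleftarrow{1_W} W$ and a decoration $P^{\flat}$ whose transitions are $\tau_1,\dots,\tau_N$, with $\tau_i$ supported entirely on $W_i$ exactly as in $P_{m_i,n_i}$ (the factor $\mathcal{G}_0$ contributes no transitions). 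Composing $\mathcal{Q}$ with it, the relevant pushout is that of $S \xleftarrow{p} W \xrightarrow{1_W} W$, which is canonically $S$ with the two legs $1_S$ and $p$; by~\eqref{eq:composite-source} the resulting source map sends $\tau_i$ to $\bar s(\tau_i)(\sigma) = \sum_{w \in W,\, p(w) = \sigma} s^{\flat}(\tau_i)(w)$, which by our choice of $p$ counts the input species of $P_{m_i,n_i}$ sent to $\sigma$, i.e.\ equals $s(\tau_i)(\sigma)$; the target map is handled dually. Hence $\mathcal{Q} \cp (\mathcal{G}_0 \oplus \cdots \oplus \mathcal{G}_N) = (S \xrightarrow{1_S} S \xleftarrow{p} W,\ P)$. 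Composing this on the right with $\mathcal{Q}'$ uses the pushout of $S \xleftarrow{p} W \xrightarrow{p} S$; because $p$ is \emph{surjective}, this pushout is $S$ with both legs equal to $1_S$, so the decoration stays $P$ and the two outer legs become identities, giving exactly $\mathcal{R}$.

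The only genuine obstacle is this last pushout. Without a padding factor covering all of $S$, the map $p$ would in general fail to be surjective — the species lying in no transition would be missed — and then $S \xleftarrow{p} W \xrightarrow{p} S$ would have a pushout strictly larger than $S$, corrupting the decoration. Including $\mathcal{G}_0 = \id_S$ (equivalently, an identity on the species participating in no transition) is precisely what forces $p$ to be surjective. Everything else — the existence of $p$, the arc-count bookkeeping through~\eqref{eq:composite-source}--\eqref{eq:composite-target}, and the degenerate case $N = 0$, where $P = 0_S$ and $\mathcal{R} = \id_S$ — is routine.
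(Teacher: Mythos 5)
Your proposal is correct and follows essentially the same route as the paper: split the decoration transition-by-transition into body nets $\mathcal{P}_{m_i,n_i}$ tensored with an identity, define a surjective gluing map from the disjoint union of arc-sets onto $S$, and use it to build the transitionless bracketing nets, with surjectivity guaranteeing the final pushout collapses to $S$. The only (immaterial) differences are that you first reduce to the identity-cospan case via Lemma~\ref{lem:extract-net} instead of folding the legs $i$ and $o$ directly into $\mathcal{Q}$ and $\mathcal{Q}'$, and you pad with $\id_S$ rather than the identity on just the species participating in no transition.
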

\begin{proof}

First, we define the Petri nets that will be involved in our composite. 

For each transition $\tau \in T$ we will define an open Petri net $G_\tau$ which has a unique input species for each input arc incoming to $\tau$ and a unique output species for each output arc outgoing from $\tau$. In particular, the sets 
\[
    I_\tau \coloneqq \sum_{\sigma \in S} s(\tau)(\sigma),\quad 
    O_\tau \coloneqq \sum_{\sigma \in S} t(\tau)(\sigma),
\] 
represent the input and output arcs to the transition $\tau$. Define $S_\tau \coloneqq I_\tau + O_\tau$. Define the Petri net $G_\tau$ to have a single transition and species $S_\tau$. Its source and target maps are defined so that there is a single input arc from each species in $I_\tau$ to the transition and there is a single output arc from the transition to each species in $O_\tau$. Note that $G_\tau$ is isomorphic to $P_{|I_\tau|, |O_\tau|}$.  Let $\Ga_\tau$ be the open Petri net 
\[
    \left(S_\tau \xrightarrow{1_{S_\tau}} S_\tau \xleftarrow{1_{S_\tau}} S_\tau, G_\tau = \left(S_\tau, 1, s_\tau, t_\tau\right)\right).
\]

Next, let $S_0$ be the species in $S$ which are neither the input species nor the output species of any transition.  Let $\Ga_0$ be the identity morphism on $S_0$.

Consider the monoidal product $\Ga = \Ga_0 \oplus \left(\bigoplus_{\tau \in T} \Ga_\tau\right)$. 
Explicitly, 
\[
    \Ga = \left(\tilde S \xrightarrow{1_{\tilde S}} \tilde S \xleftarrow{1_{\tilde S}} \tilde S, \left(G = \tilde S, T, \tilde s, \tilde t\right)\right),
\] where  $\tilde S = S_0 + \sum_{\tau \in T} S_\tau$.

Next we want to create transitionless open Petri nets $\Qa$ and $\Qa'$ which glue together the species in $G$ which arose from the same species in the original Petri net $P$. We begin by defining a map $\tilde h \colon \tilde S \to S$ which maps each species of $G$ to the species in $P$ to which it corresponds. 

For each transition $\tau$, let $f_\tau\colon I_\tau \to S$ be the unique map such that for each $\sigma \in S$, $f_\tau$ is the constant $\sigma$ map on $s(\tau)(\sigma)$. Likewise, let $g_\tau\colon O_\tau \to S$ be the unique map such that for each $\sigma \in S$, $g_\tau$ is the constant $\sigma$ map on $t(\tau)(\sigma)$. Together these induce a map $h_\tau \coloneqq [f_\tau, g_\tau]\colon S_\tau \to S$. Let $h_0$ be the inclusion of $S_0$ into $S$. Then let $\tilde h\colon \tilde S \to S$ be the universal morphism induced by $h_0$ and $h_\tau$ for $\tau \in T$.

Finally, we define 
\[
    \Qa \coloneqq  \left(X \xrightarrow{i} S \xleftarrow{h} \tilde S, 0_S\right), \quad
    \Qa' \coloneqq \left(\tilde S \xrightarrow{h} S \xleftarrow{o} Y, 0_S\right).
\]
We will show that $\Pa = \Qa \cp \Ga \cp \Qa'$.
First consider $\Qa \cp \Ga$. The composition of the underlying cospans is as follows: 
      \[\begin{tikzcd}
    	&& S \\
    	& S && {\tilde S} \\
    	X && {\tilde S} && {\tilde S}
    	\arrow["h"', from=3-3, to=2-2]
    	\arrow["i", from=3-1, to=2-2]
    	\arrow["{1_{\tilde S}}", from=3-3, to=2-4]
    	\arrow["{1_{\tilde S}}"', from=3-5, to=2-4]
    	\arrow["{1_S}", dashed, from=2-2, to=1-3]
    	\arrow["h"', dashed, from=2-4, to=1-3]
           \arrow["\lrcorner"{anchor=center, pos=.85, rotate=-45, scale = 2}, draw=none, from=3-3, to=1-3]
       \end{tikzcd}\]

The decoration of the composite is $(S, T, s', t')$ where $s'\colon T \to \Nb^S$ is defined by 
\begin{align}
    s'(\tau)(\sigma) & = \sum_{\tilde \sigma \in \tilde S | h(\tilde \sigma) = \sigma} \tilde s(\tau)(\tilde \sigma) \nonumber \\
    & = \sum_{\tilde \sigma \in S_0 | h_0(\tilde \sigma) = \sigma} + \sum_{\upsilon \in T} \sum_{\tilde \sigma \in S_\upsilon | h_\upsilon(\tilde \sigma) = \sigma} \tilde s(\tau)(\tilde \sigma) \nonumber \\
    & = \sum_{\tilde\sigma \in S_\tau | h_\tau(\tilde \sigma) = \sigma} \tilde s(\tau)(\tilde \sigma) \label{eq:reduce1}\\
    & = \sum_{\tilde \sigma \in I_\tau | f_\tau(\tilde \sigma) = \sigma} s_\tau(\tilde \sigma) + \sum_{\tilde \sigma \in O_\tau | g_\tau (\tilde \sigma) = \sigma} s_\tau(\tilde \sigma) \nonumber \\
    & = \sum_{s(\tau)(\sigma)} 1 + \sum_{t(\tau)(\sigma)} 0 \label{eq:reduce2} \\
    & = s(\tau)(\sigma). \nonumber
 \end{align}
 
 Note that Equation~\ref{eq:reduce1} follows from the fact that for $\tilde s(\tau)(\tilde \sigma)$ is $0$ unless $\tilde \sigma \in S_\tau$. Equation~\ref{eq:reduce2} follows from the fact that $f_\tau$ is the constant $\tilde \sigma$ map on $s(\tau)(\tilde \sigma)$, and thus $f_\tau(\tilde \sigma) = \sigma$ if and only if $\tilde \sigma \in s(\tau)(\sigma)$.
 
 The above calculations prove that $s' = s$. An identical argument shows that $t' = t$ as well. Therefore the composite $\Qa \cp \Ga$ is decorated by the original Petri net $P$. 
 
 Finally, we must show that $(\Qa \cp \Ga) \cp  \Qa'$ is $\Pa$. First we examine the composite of their underlying cospans:

    \[\begin{tikzcd}
    	&& S \\
    	& S && S \\
    	X && {\tilde S} && Y
    	\arrow["h"', from=3-3, to=2-2]
    	\arrow["i", from=3-1, to=2-2]
    	\arrow["h", from=3-3, to=2-4]
    	\arrow["o"', from=3-5, to=2-4]
    	\arrow["{1_S}", dashed, from=2-2, to=1-3]
    	\arrow["{1_S}"', dashed, from=2-4, to=1-3]
           \arrow["\lrcorner"{anchor=center, pos=.85, rotate=-45, scale = 2}, draw=none, from=3-3, to=1-3]
       \end{tikzcd}\]

Note that $S+_{\tilde S} S = S$ because $h\colon \tilde S \to S$ is surjective. That the decoration of the composite is $P$ again is a straightforward application of the definition of composing open Petri nets. 
\end{proof}

\begin{example}
Consider the open Petri net depicted in Figure~\ref{fig:example-petri}. 

Applying the Lemma~\ref{lemma:decomp} this Petri net can be decomposed as 
\[
    \Qa \cp (\Ga_0 \oplus \Ga_1 \oplus \Ga_2 \oplus \Ga_3) \cp \Qa'
\] where $\Ga_0$ is decorated with a transitionless Petri net and $\Ga_1, \Ga_2, \Ga_3$ are decorated with the atomic Petri nets.
Here, $\alpha$ has two input arcs and one output arc, so the atomic Petri net $P_{2,1}$ decorates $\Ga_1$;
likewise, $\beta$ has one input arc and one output arc, so $P_{1,1}$ decorates $\Ga_2$;
and $\gamma$ has one input arc and two output arcs, so $P_{1,2}$ decorates $\Ga_3$.
The decomposition is depicted in Figure~\ref{fig:example-decomposition}. 
\end{example}

\begin{figure}[h]
    \centering
    \[
        \begin{tikzpicture}
	\begin{pgfonlayer}{nodelayer}
		\node [style=species, inner sep= 2mm] (A) at (-3.5, 1.5) {$A$};
		\node [style=species, inner sep= 2mm] (B) at (0.5, 1.5) {$B$};
		\node [style=species, inner sep= 2mm] (C) at (-1.5, -0.25) {$C$};
		\node [style=species, inner sep= 2mm] (D) at (2.5, -0.25) {$D$};
		\node [style=species, inner sep= 2mm] (E) at (-1.5,-1.5) {$E$};
        
        \node [style=transition,  inner sep= 2mm, fill=green!40] (a) at (-1.5, 1.5) {$\alpha$};
        \node [style=transition,  inner sep= 2mm, fill=green!40] (b) at (-3.5, -0.25) {$\beta$};
        \node [style=transition,  inner sep= 2mm, fill=green!40] (c) at (0.5, -0.25) {$\gamma$}; 
		
		\node [style=empty] (X) at (-5.1, 1) {${\color{gray}X}$};
		\node [style=none] (Xtr) at (-4.75, 0.75) {};
		\node [style=none] (Xbr) at (-4.75, -1.25) {};
		\node [style=none] (Xtl) at (-5.4, 0.75) {};
        \node [style=none] (Xbl) at (-5.4, -1.25) {};
	
		\node [style=inputdot, fill = red!40, inner sep= 1mm] (1) at (-5, 0.5) {};
		\node [style=empty] at (-5.2, 0.5) {$1$};
		\node [style=inputdot, fill = red!40, inner sep= 1mm] (2) at (-5, 0) {};
		\node [style=empty] at (-5.2, 0) {$2$};
		\node [style=inputdot, fill = red!40, inner sep= 1mm] (3) at (-5, -0.5) {};
		\node [style=empty] at (-5.2, -0.5) {$3$};
		\node [style=inputdot, fill = red!40, inner sep= 1mm] (4) at (-5, -1) {};
		\node [style=empty] at (-5.2, -1) {$4$};

		\node [style=empty] (Y) at (4.1, 1) {${\color{gray}Y}$};
		\node [style=none] (Ytr) at (4.4, 0.75) {};
		\node [style=none] (Ytl) at (3.75, 0.75) {};
		\node [style=none] (Ybr) at (4.4, -0.75) {};
		\node [style=none] (Ybl) at (3.75, -0.75) {};

		\node [style=inputdot, fill = red!40, inner sep= 1mm] (5) at (4, 0.5) {};
		\node [style=empty] at (4.2, 0.5) {$5$};
		\node [style=inputdot, fill = red!40, inner sep= 1mm] (6) at (4, -0.5) {};
		\node [style=empty] at (4.2, -0.5) {$6$};		
		
		
	\end{pgfonlayer}
	\begin{pgfonlayer}{edgelayer}
		\draw [style=inarrow] (A) to (a);
        \draw [style=inarrow] (A) to (b);
		\draw [style=inarrow] (B) to (a);
        \draw [style=inarrow,bend left=30, looseness=1.00] (D) to (c);
		\draw [style=inarrow] (a) to (C);
        \draw [style=inarrow] (b) to (C);
        \draw [style=inarrow] (c) to (C);
        \draw [style=inarrow,bend left=30, looseness=1.00] (c) to (D);

		\draw [style=inputarrow,bend left=60, looseness=1.25] (1) to (B);
		\draw [style=inputarrow] (2) to (A);
		\draw [style=inputarrow] (3) to (A);
		\draw [style=inputarrow] (4) to (E);
		\draw [style=inputarrow] (5) to (B);
        \draw [style=inputarrow] (6) to (D);
		\draw [style=simple] (Xtl.center) to (Xtr.center);
		\draw [style=simple] (Xtr.center) to (Xbr.center);
		\draw [style=simple] (Xbr.center) to (Xbl.center);
		\draw [style=simple] (Xbl.center) to (Xtl.center);
		\draw [style=simple] (Ytl.center) to (Ytr.center);
		\draw [style=simple] (Ytr.center) to (Ybr.center);
		\draw [style=simple] (Ybr.center) to (Ybl.center);
		\draw [style=simple] (Ybl.center) to (Ytl.center);
	\end{pgfonlayer}
\end{tikzpicture}
    \]
    \caption{An open Petri net with three transitions.}
    \label{fig:example-petri}
\end{figure}
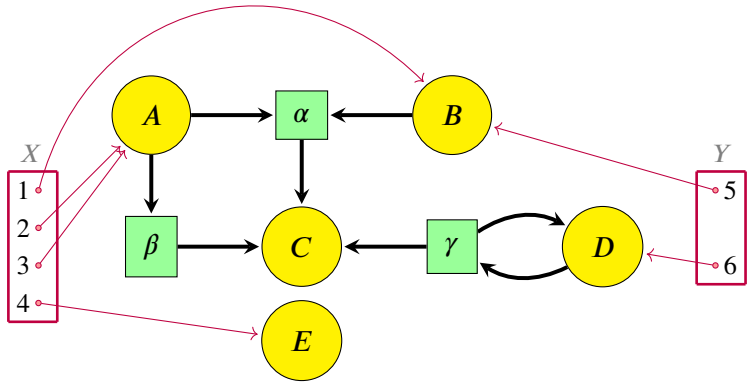

\begin{center}
\includegraphics[scale=1]
{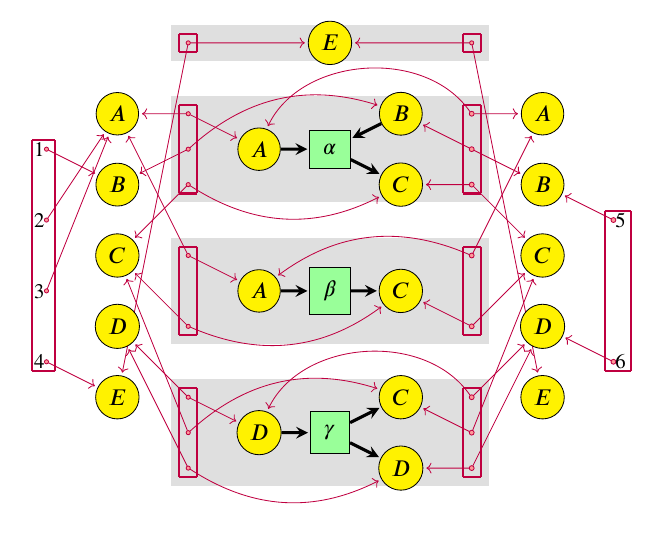}
\captionof{figure}{The decomposition of the open Petri net depicted in Figure~\ref{fig:example-petri} into atomic Petri nets as defined by Lemma~\ref{lemma:decomp}. The decomposition is $\Qa \cp (\Ga_0 \oplus \Ga_1 \oplus \Ga_2 \oplus \Ga_3) \cp \Qa'$. Graphically each $\Ga_i$ is enclosed by a grey box and they are shown in numerical order from top to bottom.}
    \label{fig:example-decomposition}
\end{center}

\section{Additive Invariants of Open Petri Nets}\label{sec:invariants}
This section is devoted to the classification theorems of additive invariants for open Petri nets and monically open Petri nets. These are discussed in Sections~\ref{sec:invariants-open} and~\ref{sec:invariants-monic-open}, respectively.
We begin by establishing that all functors from $\openpetri$ to $\cc{B}\mathbb{N}$ are monoidal.
First a definition.

\begin{definition}\label{def:additive-invariant}
An \defined{additive invariant of open Petri nets} is a monoidal functor $\openpetri \to \cc{B}\mathbb{N}$ where $\cc{B}\mathbb{N}$ is the one-object category induced by the monoid $(\mathbb{N}, +)$. The monoidal product of $\bn$ is also given by $+$.
\end{definition}

\subsection{Every invariant of \(\openpetri\) is additive}\label{sec:proof:thm:all-invariants-are-additive}

Before proving our main classification theorems, we 
first prove that all functors \(\openpetri \to \mathbf{B}\mathbb{N}\) are in fact monoidal. Therefore our classification of monoidal functors \(\openpetri \to \mathbf{B}\mathbb{N}\) is in fact a classification of \emph{all} functors \(\openpetri \to \mathbf{B}\mathbb{N}\). \\

We remind the reader of the Eckmann-Hilton argument, which states that in any monoidal category, composition and the monoidal product coincide when restricted to endomorphisms of the unit object. 
As a consequence, for open Petri nets with empty interfaces, composition coincides with their direct sum.
Specifically, if an open Petri net \( \Qa \) has an empty codomain, then:
\(
\Qa \circ \id_{\emptyset} = \Qa = \Qa \oplus \id_{\emptyset}.
\)
Likewise if \( \Qa' \) has an empty domain. Then:
\[
\Qa \circ \Qa' = (\Qa \oplus \id_{\emptyset}) \circ (\id_{\emptyset} \oplus \Qa') 
= (\Qa \circ \id_{\emptyset}) \oplus (\id_{\emptyset} \circ \Qa') 
= \Qa \oplus \Qa'.
\]
This follows naturally from the interchange law of monoidal categories.
With this in mind, we begin with the following Lemma.

\begin{lemma}\label{lem:transitionless} 
Let $F \colon \openpetri\to \bn$ be a functor, and let $\mathcal{P}$ be an open Petri net. If $\Pa$ is transitionless, then $F(\mathcal{P})=0$.

\end{lemma}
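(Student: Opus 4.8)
The plan is to use the fact from Lemma~\ref{lem:generating-transitionless-petris} that any transitionless open Petri net is built from the four generators $\mu, \eta, \delta, \epsilon$ via composition and monoidal product, but since we do not yet know $F$ is monoidal, I would instead work directly with the structure of $\mathcal{P}$ and exploit functoriality alone. The key observation is that a transitionless open Petri net is an endomorphism-like gadget that can be ``absorbed'': specifically, I claim that for a transitionless $\mathcal{P} = (X \xrightarrow{i} S \xleftarrow{o} Y, 0_S)$, there is an identity-like decomposition forcing $F(\mathcal{P})$ to be idempotent in $(\mathbb{N},+)$, and the only idempotent is $0$.

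First I would reduce to the case $X = Y$ with $\mathcal{P}$ an endomorphism. Given a general transitionless $\mathcal{P}\colon X \to Y$, note that $F(\mathcal P)$ lands in $\mathbb{N}$ regardless of objects (since $\mathbf{B}\mathbb{N}$ has one object), so $F(\mathcal{P})$ is just a natural number; functoriality gives $F(\mathcal{Q}\cp\mathcal{P}) = F(\mathcal{Q}) + F(\mathcal{P})$ for any composable $\mathcal{Q}$. So it suffices to exhibit, for each transitionless $\mathcal{P}$, transitionless open Petri nets realizing relations that pin down $F(\mathcal{P})$. The cleanest route: show every transitionless $\mathcal{P}\colon X\to Y$ can be written as $\mathcal{P} = \mathcal{P} \cp \mathcal{I}$ where $\mathcal{I}$ is a transitionless endo with $\mathcal{I}\cp\mathcal{I} = \mathcal{I}$ — but more directly, I would find a single transitionless endomorphism $e\colon S \to S$ (for instance built by gluing $\epsilon_S$ and $\eta_S$ appropriately, or $e = \eta_S$ precomposed with the counit-type map) such that $e \cp e = e$ as open Petri nets, via a pushout computation. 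Then $F(e) = F(e) + F(e)$ in $\mathbb{N}$ forces $F(e) = 0$.

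Next I would propagate this to arbitrary transitionless $\mathcal{P}$. Using Lemma~\ref{lem:extract-net} (with the decoration $P = 0_S$, so $\mathcal{R}$ has identity legs \emph{and} transitionless decoration, hence $\mathcal{R}$ is itself essentially an identity morphism $\mathrm{id}_S$), we get $\mathcal{P} = \mathcal{Q}\cp \mathrm{id}_S \cp \mathcal{Q}'$ with $\mathcal{Q} = (X\xrightarrow{i} S \xleftarrow{1_S} S, 0_S)$ and $\mathcal{Q}' = (S \xrightarrow{1_S} S \xleftarrow{o} Y, 0_S)$; so $F(\mathcal{P}) = F(\mathcal{Q}) + F(\mathcal{Q}')$. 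Thus it suffices to handle the two special shapes $(X \to S \xleftarrow{1_S} S, 0_S)$ and $(S \xrightarrow{1_S} S \leftarrow Y, 0_S)$. For such a $\mathcal{Q}$, I would precompose/postcompose with a copy of itself or with $\eta/\epsilon$-type caps: e.g.\ for $\mathcal{Q} = (X \xrightarrow{i} S \xleftarrow{1_S} S, 0_S)$, compute $\mathcal{Q}' \cp \mathcal{Q}$ where $\mathcal{Q}' = (S \xrightarrow{1_S} S \xleftarrow{i} X, 0_S)$, and observe the composite is a transitionless endo on $X$; iterating, or better, directly showing a self-composition relation $\mathcal{Q}\cp\mathcal{Q}^{\mathrm{op}}\cp\mathcal{Q} = \mathcal{Q}$ (a zig-zag identity for cospans), yields $F(\mathcal{Q}) = 2F(\mathcal{Q}) + F(\mathcal{Q}^{\mathrm{op}}) $ type relations — combined with nonnegativity in $\mathbb{N}$ this forces everything to $0$.

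The main obstacle I anticipate is that I cannot invoke monoidality of $F$ (that is precisely what we are building toward via Theorem~\ref{thm:all-invariants-are-additive}), so I must produce the needed idempotent/zig-zag relations purely at the level of open Petri nets and verify them by explicit pushout computations using the formulas~\eqref{eq:composite-source}–\eqref{eq:composite-target}. The delicate point is checking that these composites really are \emph{equal} (as morphisms, i.e.\ isomorphism classes of open Petri nets) to the pieces, not merely isomorphic up to something — but since morphisms in $\openpetri$ are already isomorphism classes, a canonical iso suffices. Once the key relation $F(e) = F(e)+F(e)$ (or an analogue) is in hand, the conclusion $F(\mathcal{P})=0$ is immediate from the fact that $0$ is the unique element of the cancellative monoid $\mathbb{N}$ with $n+n=n$.
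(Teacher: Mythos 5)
Your overall strategy---deriving relations among transitionless open Petri nets that force equations of the form $n = 2n$ or $n = n+m$ in $\mathbb{N}$, using functoriality alone---is the right one, and the reduction via Lemma~\ref{lem:extract-net} to cospans with one identity leg is sound. However, the central device you propose fails. The zig-zag identity $\mathcal{Q} \cp \mathcal{Q}^{\mathrm{op}} \cp \mathcal{Q} = \mathcal{Q}$ is false for cospans: already for $\mathcal{Q} = \eta_1 = (0 \to 1 \xleftarrow{1_1} 1,\, 0_1)$ the composite $\mathcal{Q} \cp \mathcal{Q}^{\mathrm{op}} \cp \mathcal{Q}$ has apex the pushout $1 +_{0} 1 \cong 2$, so it equals $(0 \to 2 \leftarrow 1,\,0_2)$, not $\eta_1$. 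Likewise the idempotents you hope to build from $\eta$ and $\epsilon$ do not exist: $\eta_S \cp \epsilon_S$ composed with itself has apex $S+S$ rather than $S$, so $e \cp e \neq e$. (Idempotents of the kind you describe do exist elsewhere---e.g.\ $\mu \cp \delta = (2 \to 1 \leftarrow 2,\,0_1)$ satisfies $e \cp e = e$, which forces $F(\mu)=F(\delta)=0$---but that only recovers what the paper gets more simply from $\delta \cp \mu = \id_1$.) The crux of the lemma is showing $F(\eta_1)=F(\epsilon_1)=0$, and as written your proposal contains no working mechanism for it.

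The paper closes exactly this gap with two specific relations you do not identify: $\eta_1 = \eta_2 \cp \mu$ and $\epsilon_1 = \delta \cp \epsilon_2$ (Remark~\ref{rem01}), together with the observation that $\eta_2 \cp \epsilon_2$, being an endomorphism of $\emptyset$, equals $(\eta_1 \cp \epsilon_1) \cp (\eta_1 \cp \epsilon_1)$. Combined with $F(\mu)=F(\delta)=0$ this gives $F(\eta_1 \cp \epsilon_1) = F(\eta_2)+F(\epsilon_2) = 2F(\eta_1 \cp \epsilon_1)$, hence $F(\eta_1)+F(\epsilon_1)=0$ and both terms vanish; the same doubling argument gives $F(\eta_k)=F(\epsilon_k)=0$ for all $k$. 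From there your ``cap'' idea does finish the job cleanly and without any appeal to monoidality: for $\mathcal{Q} = (X \xrightarrow{i} S \xleftarrow{1_S} S,\,0_S)$ one computes $\eta_X \cp \mathcal{Q} = \eta_S$, so $F(\mathcal{Q}) = F(\eta_S) - F(\eta_X) = 0$, and dually $\mathcal{Q}' \cp \epsilon_Y = \epsilon_S$ gives $F(\mathcal{Q}')=0$. So the skeleton of your argument is salvageable, but the key step---killing $F(\eta)$ and $F(\epsilon)$---needs the relations above rather than the (false) zig-zag and idempotency claims.
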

\begin{proof}
By Lemma~\ref{lem:generating-transitionless-petris}, any transitionless open Petri nets can be generated by the open Petri nets $\mu$, $\eta$, $\delta$, and $\epsilon$. Therefore it is sufficient to show that $F$ is trivial on these open Petri nets. 

Since $F$ is functorial, $F(\id_S) = 0$ for all species  set $S$. Thus, $\delta \cp \mu = \id_1$ implies that 
\[
    0 = F(\delta \cp \mu) = F(\delta) + F(\mu).
\] Since $F(\delta)$ and $F(\mu)$ are natural numbers, this implies that $F(\delta) = 0$ and $F(\mu) = 0$.

It remains to show that $F$ vanishes on $\eta = \eta_1$ and $\epsilon = \epsilon_1$ as well.  From the standard relations $\eta_1 = \eta_2 \cp \mu $ and $\epsilon_1 = \delta \cp \epsilon_2 $ as in \ref{rem01}, we obtain
\begin{align*}
    F(\eta_1\cp \epsilon_1) &= F(\eta_2) + F(\mu) + F(\delta) + F(\epsilon_2)\\
    & = F(\eta_2) + F(\epsilon_2)\\
    &= F(\eta_2 \cp \epsilon_2).
\end{align*}

\noindent
As composition and direct sum coincide when both open Petri nets have empty domain and codomain,
and since \( \eta_m \cp \epsilon_m \) decomposes as the direct sum
\(
\eta_m \cp \epsilon_m = \bigoplus_{i=1}^{m} (\eta_1 \cp \epsilon_1),
\)
it follows that
\[
F(\eta_2 \cp \epsilon_2) = 2 F(\eta_1 \cp \epsilon_1).
\]
Since $(\mathbb{N},+)$ is a cancellative monoid, this implies $0=F(\eta_1 \cp \epsilon_1) = F(\eta_1) + F(\epsilon_1)$. Therefore, $F(\eta) =0$ and $ F(\epsilon) = 0$, as required.
\end{proof}

\begin{remark}
Notice that the result above holds for any cancellative monoid where the identity is the only element with an inverse.
Further, the classification of invariants as a linear combination of generators relies on the monoid being abelian and cancellative.
\end{remark}

\begin{theorem}\label{thm:all-invariants-are-additive}
Every functor \(\openpetri \to \mathbf{B}\mathbb{N}\) is monoidal.
\end{theorem}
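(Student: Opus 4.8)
The plan is to show that an arbitrary functor $F\colon \openpetri \to \bn$ automatically respects the monoidal product, i.e.\ that $F(\Pa \oplus \Qa) = F(\Pa) + F(\Qa)$ and $F(I) = 0$ for the monoidal unit $I$. Since $\bn$ is the one-object category on $(\Nb,+)$ and the tensor on $\bn$ is also $+$, the unit condition is immediate: the monoidal unit of $\openpetri$ is the empty open Petri net, which is transitionless, so $F(I)=0$ by Lemma~\ref{lem:transitionless}. The real content is additivity on the tensor product of two morphisms.

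The key idea is to express $\Pa \oplus \Qa$ as a \emph{composite} of morphisms built from $\Pa$, $\Qa$, and transitionless open Petri nets, so that functoriality of $F$ together with Lemma~\ref{lem:transitionless} does the rest. First I would use Lemma~\ref{lemma:decomp-atomic} (or Lemma~\ref{lem:extract-net}) to reduce to the case where the underlying cospans of $\Pa$ and $\Qa$ are identities: writing $\Pa = \Qa_1 \cp \Ra \cp \Qa_2$ with $\Qa_1,\Qa_2$ transitionless and $\Ra$ having identity legs, and similarly for $\Qa$, one checks that $\Pa \oplus \Qa$ decomposes compatibly, and $F$ kills all the transitionless pieces, so it suffices to prove $F(\Ra \oplus \Ra') = F(\Ra) + F(\Ra')$ for $\Ra, \Ra'$ with identity legs. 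For such morphisms, if $\Ra$ has species set $S$ and $\Ra'$ has species set $S'$, then $\Ra \oplus \Ra'$ has species set $S + S'$, and I claim
\[
    \Ra \oplus \Ra' = (\Ra \oplus \id_{S'}) \cp (\id_S \oplus \Ra')
\]
as morphisms in $\openpetri$ — both sides have identity legs on $S+S'$ and decoration equal to the disjoint union of the decorations of $\Ra$ and $\Ra'$; this is a direct check against the explicit formulas~\eqref{eq:composite-source},~\eqref{eq:composite-target} for composition and the formula for $\oplus$. Applying $F$ and functoriality gives $F(\Ra\oplus\Ra') = F(\Ra \oplus \id_{S'}) + F(\id_S \oplus \Ra')$, so it remains to identify $F(\Ra \oplus \id_{S'})$ with $F(\Ra)$ (and symmetrically).

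To do this I would exhibit $\Ra \oplus \id_{S'}$ and $\Ra$ as related by composition with transitionless "padding" morphisms: using the boundary nets $\eta_{S'}$, $\epsilon_{S'}$ of Definition~\ref{def:boundary-nets}, one has (up to the identities of Remark~\ref{rem01}) $\Ra = (\id_S \oplus \eta_{S'}) \cp (\Ra \oplus \id_{S'}) \cp (\id_S \oplus \epsilon_{S'})$, since $\eta_{S'}$ creates the species $S'$ out of nothing and $\epsilon_{S'}$ discards them, leaving the decoration of $\Ra$ untouched (again a check against the composition formulas, using that the transition set of $\Ra \oplus \id_{S'}$ is that of $\Ra$). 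Since $\id_S \oplus \eta_{S'}$ and $\id_S \oplus \epsilon_{S'}$ are transitionless, $F$ sends them to $0$, whence $F(\Ra) = F(\Ra \oplus \id_{S'})$. Combining the three displayed identities yields $F(\Ra \oplus \Ra') = F(\Ra) + F(\Ra')$, and unwinding the reduction gives $F(\Pa \oplus \Qa) = F(\Pa) + F(\Qa)$ for all open Petri nets; the coherence isomorphisms are trivial in $\bn$, so $F$ is monoidal.

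The main obstacle I anticipate is purely bookkeeping: verifying that each of the claimed equalities of open Petri nets really holds \emph{on the nose} (same cospan legs, same decoration up to the canonical identifications) by grinding through the pushout and the explicit source/target formulas~\eqref{eq:composite-source}–\eqref{eq:composite-target}, and making sure the reduction to identity-legs morphisms via Lemma~\ref{lem:extract-net} interacts correctly with $\oplus$. There is no conceptual difficulty once one is willing to unpack these definitions carefully.
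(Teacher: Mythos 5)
Your overall strategy (turn the tensor into a composite via the interchange law, then show $F(\Ra \oplus \id_{S'}) = F(\Ra)$ by padding with transitionless nets) matches the paper's, but the step where you actually establish $F(\Ra \oplus \id_{S'}) = F(\Ra)$ contains a genuine error. The claimed identity
\[
\Ra = (\id_S \oplus \eta_{S'}) \cp (\Ra \oplus \id_{S'}) \cp (\id_S \oplus \epsilon_{S'})
\]
is false. Composition in $\openpetri$ is computed by a pushout of species sets, and a pushout never deletes anything from the apex: composing with $\epsilon_{S'} = (S' \xrightarrow{1_{S'}} S' \leftarrow 0,\ 0_{S'})$ removes $S'$ from the boundary but leaves those species in the decoration. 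Writing $R$ for the decoration of $\Ra$, the right-hand side above is the open Petri net $(S \hookrightarrow S+S' \hookleftarrow S,\ R + 0_{S'})$, i.e.\ $\Ra$ with $|S'|$ extra isolated species adjoined --- not isomorphic to $\Ra$, since the species counts differ. So the chain $F(\Ra) = 0 + F(\Ra \oplus \id_{S'}) + 0$ does not follow, and the intuition that ``$\epsilon_{S'}$ discards the species, leaving the decoration of $\Ra$ untouched'' is exactly what goes wrong.

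The gap is repairable, and seeing how also clarifies what the paper does differently. One fix along your lines: the composite you wrote down equals $\Ra \cp \Qa_2$ where $\Qa_2 = (S \hookrightarrow S+S' \hookleftarrow S,\ 0_{S+S'})$ is transitionless; applying $F$ to the \emph{true} equation $(\id_S \oplus \eta_{S'}) \cp (\Ra \oplus \id_{S'}) \cp (\id_S \oplus \epsilon_{S'}) = \Ra \cp \Qa_2$ and killing the transitionless factors on both sides via Lemma~\ref{lem:transitionless} gives $F(\Ra \oplus \id_{S'}) = F(\Ra)$. The paper instead closes everything up: it forms $(\eta_{X} \oplus \eta_{M}) \cp (\Pa \oplus \id_M) \cp (\epsilon_{Y} \oplus \epsilon_M)$, a net with empty domain and codomain, observes that for such closed nets composition and monoidal product coincide, and re-brackets it as $(\eta_{X} \cp \Pa \cp \epsilon_{Y}) \cp (\eta_M \cp \id_M \cp \epsilon_M)$; applying $F$ to both expressions and discarding the transitionless terms yields $F(\Pa \oplus \id_M) = F(\Pa)$ without ever needing to ``delete'' species. (Your preliminary reduction to identity-legged morphisms via Lemma~\ref{lem:extract-net} is harmless but unnecessary: the interchange-law step works for arbitrary morphisms, as in the paper.)
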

\begin{proof}

\newcommand{\QsY}{Y\pr}
Let $\Pa$ and $\Qa$ be the open Petri nets 
    \begin{align*}
    \Pa &= \left(X \xrightarrow{i} S \xleftarrow{o} Y, \; \left(s, t\colon T \to \mathbb{N}^S\right) \right) \\
    \Qa &= \left(U \xrightarrow{i\pr} S\pr \xleftarrow{o\pr} V, \; \left(s\pr, t\pr\colon T\pr \to \mathbb{N}^{S\pr}\right) \right).
    \end{align*}  
    Then \[
        \Pa \oplus \Qa = (\Pa \cp \id_{Y}) \oplus (\id_{U} \cp Q) = (\Pa \oplus \id_{U}) \cp (\id_{Y} \oplus Q).
    \]
    Therefore, it suffices to show for each open Petri net $\Pa$ and each finite set $M$ that $F(\Pa \oplus \id_M) = F(\Pa)$.

    We begin by considering the open Petri net 
    \[
        \Ra \coloneqq (\eta_{X} \oplus\eta_M) \cp (\Pa \oplus \id_M ) \cp (\epsilon_{Y} \oplus\epsilon_M),
    \] 
    whose domain and codomain are both $\emptyset$.   
Since composition and direct sum coincide in such cases and using the interchange law, we have
    \begin{align*}
         \Ra &= (\eta_{X} \cp \Pa \cp \epsilon_{Y}) \oplus (\eta_M \cp \id_M \cp \epsilon_M),\\
         & = (\eta_{X} \cp \Pa \cp \epsilon_{Y}) \cp (\eta_M \cp \id_M \cp \epsilon_M).
    \end{align*}
     
It follows that,
    \begin{equation*}\label{eq:thm1.6}
      F(\eta_{X + M}) + F(\Pa\oplus \id_M) + F(\epsilon_{Y + M}) = F(\Ra) = F(\eta_{X}) + F(\Pa) + F(\epsilon_{Y}) + F(\eta_{M} ) + F( \epsilon_{M}).
    \end{equation*}
    This derivation uses that $\eta_{X} \oplus \eta_{M} = \eta_{X+M}$, and likewise that $\epsilon_{Y} \oplus \epsilon_{M} = \epsilon_{Y+M}$.

    Finally, by Lemma \ref{lem:transitionless}, all of the boundary terms are $0$, revealing that
    $F(\Pa \oplus \id_M) = F(\Pa)$.
    
\end{proof}

\subsection{Classifying additive invariants of open Petri nets}\label{sec:invariants-open}

We begin by proving the functoriality of the maps $F_{m,n}\colon \openpetri \to \bn$ that take an open Petri net to the number of transitions having $m$ input arcs and $n$ output arcs. These will form the building blocks of our classification of additive invariants.

\begin{lemma}\label{lemma:Fmn-functorial}
There is a monoidal functor $F_{m,n}\colon \openpetri \to \bn$ such that for an open Petri net $\mathcal P$ with decoration $P$, $F_{m,n}(\mathcal P)$ is the number of transitions in $P$ with $m$ input arcs and $n$ output arcs. 
\end{lemma}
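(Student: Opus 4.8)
The plan is to define $F_{m,n}$ on objects and morphisms of $\openpetri$ and check the functor and monoidal axioms directly, using the explicit formulas for composition and tensor product from Section~\ref{sec:prelim}. On objects, there is only the unique object of $\bn$, so $F_{m,n}$ does the obvious thing. On a morphism, i.e.\ an open Petri net $\Pa = (X \xrightarrow{i} S \xleftarrow{o} Y, (s,t\colon T \to \Nb^S))$, set $F_{m,n}(\Pa)$ to be the number of $\tau \in T$ with $\sum_{\sigma \in S} s(\tau)(\sigma) = m$ and $\sum_{\sigma \in S} t(\tau)(\sigma) = n$; that is, $F_{m,n}(\Pa) = \#\{\tau \in T : |s(\tau)| = m,\ |t(\tau)| = n\}$, where I write $|s(\tau)|$ for the total arc count of Equation~\eqref{eq:input-arcs}. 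First I would check this is well defined on isomorphism classes of open Petri nets: an isomorphism of open Petri nets includes a bijection of transition sets that preserves source and target data, hence preserves each arc count, so it preserves the cardinality being counted.

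Next I would verify $F_{m,n}(\id_X) = 0$, which is immediate since the identity open Petri net on $X$ has empty transition set. The main step is compositionality: given composable $\Pa$ (with transitions $T$) and $\Qa$ (with transitions $T\pr$), the composite $\Pa \cp \Qa$ has transition set $T + T\pr$, and by Equations~\eqref{eq:composite-source} and~\eqref{eq:composite-target} the source/target vectors $\bar s, \bar t$ of a transition $\tau \in T$ are obtained by pushing $s(\tau), t(\tau)$ forward along $\overline{i\pr}\colon S \to S +_Y S\pr$ (and symmetrically for $\tau \in T\pr$ along $\overline{o}$). Pushing a vector in $\Nb^S$ forward along any function of index sets preserves the total sum of entries: $\sum_{\bar\sigma} \bar s(\tau)(\bar\sigma) = \sum_{\bar\sigma} \sum_{\sigma : \overline{i\pr}(\sigma) = \bar\sigma} s(\tau)(\sigma) = \sum_{\sigma \in S} s(\tau)(\sigma)$, and likewise for $\bar t$ and for transitions of $\Qa$. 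Hence a transition of $\Pa \cp \Qa$ has arc counts $(m,n)$ iff the corresponding transition of $\Pa$ or of $\Qa$ does, so $F_{m,n}(\Pa \cp \Qa) = F_{m,n}(\Pa) + F_{m,n}(\Qa)$; this is exactly the composition law in $\bn$. The monoidal structure is even easier: in $\Pa \oplus \Qa$ the transition set is again the disjoint union $T + T\pr$ and the source/target maps restrict to those of $\Pa$ and $\Qa$ without any identification of species, so arc counts are literally unchanged, giving $F_{m,n}(\Pa \oplus \Qa) = F_{m,n}(\Pa) + F_{m,n}(\Qa)$; together with $F_{m,n}$ of the monoidal unit being $0$ (empty transition set), and the fact that all coherence isomorphisms in $\openpetri$ have transitionless underlying data, this establishes that $F_{m,n}$ is a (strict, symmetric) monoidal functor.

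The only mildly delicate point — the "hard part," such as it is — is bookkeeping the pushout: one must be sure that when composing, a transition $\tau \in T$ really does keep the same total arc count even though distinct species of $S$ may be glued together in $S +_Y S\pr$ (merging columns of the vector) and even though $S\pr$-species get adjoined (adding zero columns). Both operations visibly preserve the column sum, so no cancellation or loss occurs; this is where the choice to count arcs rather than, say, distinct input species matters, and it is worth spelling out the one-line computation above so the reader sees that the sum in~\eqref{eq:composite-source} telescopes correctly. With that observation in hand, all the functor and monoidality checks are routine applications of the explicit definitions.
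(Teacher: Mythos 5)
Your proposal is correct and follows essentially the same route as the paper's proof: identity nets have no transitions, composition preserves each transition's arc counts because the pushout formulas \eqref{eq:composite-source} and \eqref{eq:composite-target} only merge or adjoin columns without changing column sums, and the monoidal product is a disjoint union. You simply spell out the telescoping-sum computation (and the well-definedness and coherence checks) that the paper leaves implicit.
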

\begin{proof}
For an object $S$ in $\openpetri$, the identity open Petri net is $\id_S = \left(S \xrightarrow{1_S} S \xleftarrow{1_S}S, 0_S\right)$. Since $0_S$ has no transitions, we obtain $F_{m,n}\left(S \xrightarrow{1_S} S \xleftarrow{1_S}S, 0_S\right) = 0$.

Consider two open Petri nets $\Pa = (X \to S \leftarrow Y, P)$ and $\Pa' = (Y \to S' \leftarrow Z, P')$, such that $P$ and $P'$ have $T$ and $T'$ as transition sets, respectively. Recall that  the Petri net decorating $\Pa \cp \Pa'$ has transition set $T + T'$. For $\tau \in T$, suppose that $\tau$ has $k$ input arcs in $P$. Equations~\ref{eq:input-arcs} and~\ref{eq:composite-source} imply that $\tau$ has $k$ input arcs in Petri net decorating $\Pa' \cp \Pa$. Likewise for output arcs and for $\tau' \in T'$. Therefore, $F_{m,n}(\Pa\cp  \Pa') =  F_{m,n}(\Pa') + F_{m,n}(\Pa)$. Similarly, for open Petri nets $\Pa$ and $\Qa$, we have $F_{m,n}(\Pa \oplus \Qa) =  F_{m,n}(\Pa) + F_{m,n}(\Qa)$.
\end{proof}

\begin{example}
    Consider the open Petri net in Figure~\ref{fig:example-petri}.
    The transition labeled $\alpha$ has two input arcs and one output arc, so $F_{2,1}$ applied to this open Petri net is $1$. The transition $\beta$ has one input arc and one output arc. Therefore, $F_{1,1}$ applied to this open Petri net is $1$. Finally, the transition $\gamma$ has one input arc and two output arcs; $F_{1,2}$ applied to this open Petri net is $1$.  For all other $m,n$, $F_{m,n}$ applied to these open Petri nets is $0$.
\end{example}

We are now ready to present the proof of one of our main results, Theorem~\ref{thm:open-petri-main-thm}.

\begin{theorem}\label{thm:open-petri-main-thm}
For any two $m, n \in \mathbb{N}$, there is a functor \[F_{m,n}\colon \openpetri \to \bn\] which maps each open Petri net to the total number of transitions with exactly $n$ input arcs and $m$ output arcs. Furthermore, any additive invariant $G\colon \openpetri \to \bn$ is completely determined --- as a linear combination of the functors $F_{m,n}$ defined above --- by its values on the family $\Pa_{m,n}$. In particular, we have 
\[
    G(-) = \sum_{m,n \in \mathbb{N}} G(\Pa_{m,n}) F_{m,n}(-).
\]
\end{theorem}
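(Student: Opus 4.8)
The plan is to reduce an arbitrary open Petri net to the pieces produced by Lemma~\ref{lemma:decomp} and then apply $G$ term by term. The first assertion — the existence of a monoidal functor $F_{m,n}$ counting transitions with $m$ input and $n$ output arcs — is precisely Lemma~\ref{lemma:Fmn-functorial}, so nothing further is needed for it. Before starting the second assertion I would record two elementary facts. First, an ``orthonormality'' of the $\Pa_{m,n}$ under the $F_{k,\ell}$: the unique transition of $P_{m,n}$ has $m$ distinct input species, each joined to it by a single arc, hence exactly $m$ input arcs, and likewise exactly $n$ output arcs, so $F_{k,\ell}(\Pa_{m,n})$ equals $1$ when $(k,\ell)=(m,n)$ and $0$ otherwise. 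Second, for a fixed open Petri net $\Pa$ with transition set $T$, at most $|T|$ of the numbers $F_{m,n}(\Pa)$ are nonzero, so the sum $\sum_{m,n}G(\Pa_{m,n})F_{m,n}(\Pa)$ is finite and the right-hand side of the claimed identity is a well-defined element of $\mathbb{N}$.

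Now let $G\colon\openpetri\to\bn$ be an additive invariant, i.e.\ a monoidal functor; by Theorem~\ref{thm:all-invariants-are-additive} this is no restriction at all. Fix an open Petri net $\Pa$ with transition set $T$ and apply Lemma~\ref{lemma:decomp} to write
\[
    \Pa = \Qa \cp \bigl(\Ga_0 \oplus \Ga_1 \oplus \cdots \oplus \Ga_N\bigr) \cp \Qa',
\]
where $\Qa,\Qa'$ are transitionless, $\Ga_0$ is an identity morphism of $\openpetri$, and for $i=1,\dots,N$ the net $\Ga_i$ is a body net whose decoration is isomorphic to $P_{m_i,n_i}$. By the explicit construction in the proof of Lemma~\ref{lemma:decomp} (together with Remark~\ref{rmk:atomic}), $\Ga_i$ has identity legs and exactly $m_i+n_i$ species, so $\Ga_i$ coincides, as a morphism of $\openpetri$, with the canonical body net $\Pa_{m_i,n_i}$; moreover $N=|T|$ and, under the bijection between the transitions of $\Pa$ and the indices $i$, the transition corresponding to $\Ga_i$ has precisely $m_i$ input arcs and $n_i$ output arcs. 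Applying $G$ and using functoriality, then Lemma~\ref{lem:transitionless} to discard $G(\Qa)$ and $G(\Qa')$, then monoidality of $G$, and finally $G(\Ga_0)=0$ (because $G$ preserves identities, or because $\Ga_0$ is transitionless), we obtain
\[
    G(\Pa) = \sum_{i=1}^{N} G(\Ga_i) = \sum_{i=1}^{N} G\bigl(\Pa_{m_i,n_i}\bigr).
\]

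To finish, I would compute the right-hand side of the claimed identity on the same $\Pa$. By the definition of $F_{m,n}$ in Lemma~\ref{lemma:Fmn-functorial}, $F_{m,n}(\Pa)$ is the number of transitions of $\Pa$ with $m$ input and $n$ output arcs, which by the previous paragraph equals $\#\{\,i : (m_i,n_i)=(m,n)\,\}$; hence
\[
    \sum_{m,n\in\mathbb{N}} G(\Pa_{m,n})\,F_{m,n}(\Pa) = \sum_{m,n\in\mathbb{N}} G(\Pa_{m,n})\cdot\#\{\,i:(m_i,n_i)=(m,n)\,\} = \sum_{i=1}^{N} G\bigl(\Pa_{m_i,n_i}\bigr) = G(\Pa).
\]
Since $\Pa$ was arbitrary, $G = \sum_{m,n}G(\Pa_{m,n})F_{m,n}$.

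The substantive input is entirely contained in Lemma~\ref{lemma:decomp} and Lemma~\ref{lem:transitionless}, both already proved; the rest is bookkeeping. The one step that deserves care is the identification of each $\Ga_i$ with $\Pa_{m_i,n_i}$ as a morphism of $\openpetri$ — one must check that the net built for a transition $\tau$ in the proof of Lemma~\ref{lemma:decomp} really is the atomic net $P_{|I_\tau|,|O_\tau|}$ with identity legs, so that passing to isomorphism classes of open Petri nets makes it literally equal to $\Pa_{|I_\tau|,|O_\tau|}$. A minor secondary point, handled at the outset, is the finiteness of the index sum defining the right-hand side.
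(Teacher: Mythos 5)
Your proposal is correct and follows essentially the same route as the paper's proof: both rest on Lemma~\ref{lemma:decomp} to factor $\Pa$ into transitionless pieces and body nets $\Ga_i\cong\Pa_{m_i,n_i}$, kill the transitionless factors via Lemma~\ref{lem:transitionless}, and use the orthonormality $F_{k,\ell}(\Pa_{m,n})=\delta_{(k,\ell),(m,n)}$ to match the two sides. The only differences are presentational — you verify finiteness of the sum and compute $F_{m,n}(\Pa)$ by direct counting, where the paper instead invokes additivity of $\sum_{m,n}F(\Pa_{m,n})F_{m,n}$ — neither of which changes the substance.
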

\begin{proof}

In Lemma~\ref{lemma:Fmn-functorial} we verified that the maps $F_{m,n}\colon \openpetri \to \bn$ as defined in the first sentence of the Theorem statement are indeed monoidal functors. Next we show that any monoidal functor $F \colon \openpetri \to \bn$ satisfies
\[
    F = \sum_{m,n \in \Nb} F(\Pa_{m,n}) F_{m,n}.
\]

First, for the open Petri net $\Pa_{\bar m,\bar n}$ observe that $\sum_{m,n \in \Nb} F(\Pa_{m,n}) F_{m,n}(\Pa_{\bar m, \bar n}) = F(\Pa_{\bar m, \bar n})$ because $F_{m,n}(\Pa_{\bar m, \bar n})$ is $1$ if $m = \bar m$ and $ n = \bar n$ and $0$ otherwise.

Now let $\mathcal{P} = (X\xrightarrow{} S \xleftarrow{} Y, P)$ be any open Petri net. Let $\mathcal{Q} \cp \left( \bigoplus_{i=0}^{N} \mathcal{G}_{i} \right) \cp \mathcal{Q}'$ be the decomposition of $\Pa$ as given in Lemma~\ref{lemma:decomp}. Thus, using this decomposition of $\Pa$, we can invoke the additivity of $F$ together with Lemma~\ref{lem:transitionless} to obtain 
\[
    F(\Pa)  = F(\Qa) + F(\Ga_0) + \sum_{i = 1}^N F(\Ga_i) + F(\Qa') = \sum_{i = 1}^N F(\Ga_i).
\]   
Likewise $\sum_{m,n} F(\Pa_{m,n})F_{m,n}$ is an additive invariant and so 
\[
    \sum_{m,n} F(\Pa_{m,n})F_{m,n}(\Pa) = \sum_{i = 1}^N \sum_{m,n} F(\Pa_{m,n})F_{m,n}(\Ga_i)
\]

For $i = 1,..., N$, $\Ga_{i}$ is decorated with an atomic Petri net $P_{m_i, n_i}$. Therefore, $\Ga_i$ is isomorphic to $\Pa_{m_i, n_i}$ and hence $F(\Ga_i) = \sum_{m,n} F(\Pa_{m,n})F_{m,n}(\Ga_i)$. 
Thus, as desired, we have that
\[
     \sum_{m,n} F(\Pa_{m,n})F_{m,n}(\Pa) = \sum_{i = 1}^N \sum_{m,n} F(\Pa_{m,n})F_{m,n}(\Ga_i) = \sum_{i = 1}^N F(\Ga_i) = F(\Pa) .
\]

\end{proof}

\subsection{Classifying additive invariants of monically open Petri nets}\label{sec:invariants-monic-open}

In this section, we show that the invariants of mope nets are linear combinations of some
countable generating sets. These play the role of $F_{m,n}$ in the classification of additive invariants for all open Petri nets.  In this case, the generators correspond to the two subclasses of mope nets which we have already encountered, namely {\it body  nets} and {\it boundary  nets}, which were defined in Definitions~\ref{def:body-type} and~\ref{def:boundary-nets}, respectively.

Our first set of generating functors consist of functors $F_\bt$ for body types $\bt$. These are defined in Lemma~\ref{lem:body-type-functor} and capture invariants of body nets. The second set of generating functors consist of functors $F_{a,z}$ where $\{a_k\}_{k \in \Nb}$ and $\{z_k\}_{k \in \Nb}$ are non-decreasing sequences satisfying the law in Equation~\ref{eq:az-condition}. These are   defined in Lemma~\ref{lem:boundary-type-functor} and capture invariants having to do with the underlying cospan of the open Petri net.

\begin{lemma}\label{lem:body-type-functor}
Let $\bt$ be a body type. There is a monoidal functor $F_\bt \colon \mope \to \bn$ such that for a mope net $\Ma$ with decoration $P$, $F_\bt(\Ma)$ is the number of transitions in $P$ with type $\bt$.
\end{lemma}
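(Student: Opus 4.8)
The plan is to follow the template of Lemma~\ref{lemma:Fmn-functorial}, the essential new point being that composition in $\mope$ --- unlike in $\openpetri$ --- never identifies two distinct species of one of the factors, and hence preserves the body type of every transition. First I would \emph{define} $F_\bt$: on objects, send every finite set to the unique object of $\bn$; on a morphism represented by a mope net $\Ma$ with decoration $P=(s,t\colon T\to\Nb^S)$, set $F_\bt(\Ma)$ to be the number of $\tau\in T$ whose body type (Definition~\ref{def:body-type}) is $\bt$. This is well defined on isomorphism classes because the body type of a transition is plainly preserved by any isomorphism of Petri nets. Functoriality on identities is immediate, since $\id_S$ is decorated by the transitionless net $0_S$, so $F_\bt(\id_S)=0$, the unit of $\bn$.

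The heart of the argument is preservation of composition. Let $\Ma\colon X\to Y$ and $\Ma'\colon Y\to Z$ be composable mope nets with decorations $P=(s,t\colon T\to\Nb^S)$, $P'=(s',t'\colon T'\to\Nb^{S'})$, and legs $i,o,i',o'$ as in Section~\ref{sec:prelim}. By the definition of composition, $\Ma\cp\Ma'$ has transition set $T+T'$, species set the pushout $S+_Y S'$, and structure maps $\bar s,\bar t$ given by Equations~\ref{eq:composite-source} and~\ref{eq:composite-target}. Because $\Ma$ is a mope net, $o\colon Y\to S$ is monic, and since pushouts along monomorphisms in $\finset$ preserve monomorphisms, the canonical map $\overline{i\pr}\colon S\to S+_Y S'$ is monic; symmetrically $\overline{o}\colon S'\to S+_Y S'$ is monic. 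Hence for $\tau\in T$ the sum in Equation~\ref{eq:composite-source} has at most one term, so $\bar s(\tau)(\overline{i\pr}(\sigma))=s(\tau)(\sigma)$ for every $\sigma\in S$ and $\bar s(\tau)(\bar\sigma)=0$ whenever $\bar\sigma$ lies outside the image of $\overline{i\pr}$; the identical statement holds for $\bar t$. Consequently the species participating in $\tau$ in the composite are exactly the $\overline{i\pr}$-images of those participating in $\tau$ in $P$, and the ordered multiset $[(\bar s(\tau)(\bar\sigma),\bar t(\tau)(\bar\sigma))]_{\bar\sigma}$ equals $[(s(\tau)(\sigma),t(\tau)(\sigma))]_{\sigma}$; so $\tau$ has body type $\bt$ in $\Ma\cp\Ma'$ iff it does in $P$, and dually for $\tau'\in T'$ via $\overline{o}$. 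Counting gives $F_\bt(\Ma\cp\Ma')=F_\bt(\Ma)+F_\bt(\Ma')$, which is composition in $\bn$.

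Finally, for monoidality, the decoration of $\Ma\oplus\Ma'$ has transition set the disjoint union $T+T'$ and source/target maps restricting on each summand exactly to those of $P$ and $P'$ --- no species are identified at all --- so body types are preserved and $F_\bt(\Ma\oplus\Ma')=F_\bt(\Ma)+F_\bt(\Ma')$; the monoidal unit $\id_\emptyset$ is decorated by the transitionless net $0_\emptyset$ and is sent to $0$, and the coherence isomorphisms of $\bn$ are trivial. Thus $F_\bt$ is a monoidal functor. The only genuinely nonformal step is the monicity of the pushout coprojections $\overline{i\pr}$ and $\overline{o}$, and this is precisely where passing to $\mope$ rather than $\openpetri$ is used; the rest is bookkeeping with Equations~\ref{eq:composite-source}--\ref{eq:composite-target}. (I would flag that in $\openpetri$ this argument fails exactly because those coprojections may collapse an input and an output species of a single transition into one catalyst, changing the body type --- which is why body types are not the right invariants there.)
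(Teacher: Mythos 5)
Your proof is correct and follows essentially the same route as the paper's, which simply observes that because the legs are monic, no two input and/or output species of a transition get identified under composition; you make this precise via stability of monomorphisms under pushout in $\finset$. One small misattribution: the monicity of $\overline{i\pr}\colon S\to S+_Y S'$ follows from that of $i\pr$ (the left leg of $\Ma'$), not of $o$, and dually $\overline{o}$ is monic because $o$ is --- but since all legs of both mope nets are monic, this relabeling does not affect the argument.
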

\begin{proof}
That $F_\bt$ is $0$ on the identity and respects the monoidal product follows the same reasoning as in the proof of Lemma~\ref{lemma:Fmn-functorial}. 

Consider two mope nets $\Ma = (X \to S \leftarrow Y, P)$ and $\Ma' = (Y \to S' \leftarrow Z, P')$ such that $P$ and $P'$ have $T$ and $T'$ as transition sets, respectively. Recall that the Petri net decorating $\Ma \cp \Ma'$ has transition set $T + T'$. For $\tau \in T$, suppose that $\tau$ has body type $\bt$. Since the legs of $\Ma$ and $\Ma'$ are monic, no two input and/or output species to $\tau$ are identified. Thus, $\tau$ also has body type $\bt$ in the Petri net decorating $\Ma \cp \Ma'$. Likewise if $\tau$ does not have body type $\bt$ in the Petri net decorating $\Ma$, then $\tau$ does not have body type $\bt$ in the Petri net decorating $\Ma \cp \Ma'$. The same is true for transitions in the Petri net decorating $\Ma'$. Therefore $F_\bt(\Ma \cp \Ma') = F_\bt(\Ma) + F_\bt(\Ma')$.
\end{proof}

\begin{lemma}\label{lem:boundary-type-functor}
    Let $\{a_k\}_{k \in \Nb}$ and $\{z_k\}_{k \in \Nb}$ be non-decreasing sequences of natural numbers that satisfy 
    \begin{equation} \label{eq:az-condition}
    k(a_1 + z_1) = a_k + z_k
    \end{equation} for all $k \in \Nb$. Then there is a functor $F_{a,z} \colon  \mope \to \bn$ defined such that for a mope net $\Ma  = (X \to S \leftarrow Y, P)$  
    \[
        F_{a,z}(\Ma) = (a_{|S|} - a_{|X|}) + (z_{|S|} - z_{|Y|}).
    \]
\end{lemma}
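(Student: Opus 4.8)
The goal is to show that the assignment $\Ma \mapsto (a_{|S|} - a_{|X|}) + (z_{|S|} - z_{|Y|})$ is a well-defined monoidal functor $\mope \to \bn$. Since $\bn$ is a one-object category, "functor" amounts to three things: (i) the value lands in $\Nb$ (it is a priori only an integer), (ii) identities are sent to $0$, and (iii) the value is additive under composition of mope nets, and additionally we need additivity under $\oplus$ for the monoidal structure. The plan is to dispatch (ii) and (iii)/monoidality by direct computation and to isolate (i)—nonnegativity—as the one point requiring the hypothesis~\eqref{eq:az-condition} together with the monicity of the legs.

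First I would record the structural facts about composition that we need. If $\Ma = (X \xrightarrow{i} S \xleftarrow{o} Y, P)$ and $\Ma' = (Y \xrightarrow{i'} S' \xleftarrow{o'} Z, P')$ are mope nets, then $\Ma \cp \Ma'$ has apex the pushout $S +_Y S'$ and outer legs $X \to S +_Y S' \leftarrow Z$. Because $i'$ and $o$ are monic, the pushout square is also a pullback (pushout of a mono along any map in $\finset$), so $|S +_Y S'| = |S| + |S'| - |Y|$. For the identity $\id_S = (S \xrightarrow{1_S} S \xleftarrow{1_S} S, 0_S)$ we get $F_{a,z}(\id_S) = (a_{|S|} - a_{|S|}) + (z_{|S|} - z_{|S|}) = 0$, giving (ii). For composition, using $|S +_Y S'| = |S| + |S'| - |Y|$, the apex contributes $a_{|S|+|S'|-|Y|}$; by~\eqref{eq:az-condition} we have $a_k = k(a_1+z_1) - z_k$, but more directly it is cleaner to observe that~\eqref{eq:az-condition} forces $a_{k+l-m} = a_k + a_l - a_m$ whenever $k+l-m \ge 0$ — wait, that is not automatic, so instead I would argue: write $c = a_1 + z_1$, so $a_k + z_k = kc$ for all $k$. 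Then
\begin{align*}
F_{a,z}(\Ma \cp \Ma') &= (a_{|S +_Y S'|} - a_{|X|}) + (z_{|S +_Y S'|} - z_{|Z|})\\
&= (a_{|S +_Y S'|} + z_{|S +_Y S'|}) - a_{|X|} - z_{|Z|}\\
&= (|S| + |S'| - |Y|)c - a_{|X|} - z_{|Z|}.
\end{align*}
On the other hand $F_{a,z}(\Ma) + F_{a,z}(\Ma') = (a_{|S|} + z_{|S|} - a_{|X|} - z_{|Y|}) + (a_{|S'|} + z_{|S'|} - a_{|Y|} - z_{|Z|}) = |S|c + |S'|c - a_{|X|} - z_{|Y|} - a_{|Y|} - z_{|Z|} = |S|c + |S'|c - a_{|X|} - z_{|Z|} - |Y|c$, using $a_{|Y|} + z_{|Y|} = |Y|c$. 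The two expressions agree, so $F_{a,z}$ is additive under $\cp$. The monoidal product is handled identically: $\Ma \oplus \Ma'$ has apex $S + S'$ of size $|S| + |S'|$, input $X + X'$, output $Y + Y'$, and the same telescoping with $a_k + z_k = kc$ gives $F_{a,z}(\Ma \oplus \Ma') = F_{a,z}(\Ma) + F_{a,z}(\Ma')$.

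The remaining and genuinely essential point is (i): that $F_{a,z}(\Ma) \ge 0$ for every mope net $\Ma$. Here monicity is used: since $i \colon X \to S$ and $o \colon Y \to S$ are injective, $|X| \le |S|$ and $|Y| \le |S|$, and since $\{a_k\}$ and $\{z_k\}$ are non-decreasing we get $a_{|S|} - a_{|X|} \ge 0$ and $z_{|S|} - z_{|Y|} \ge 0$, hence the sum is a nonnegative integer. (Note that~\eqref{eq:az-condition} is not needed for nonnegativity — only monotonicity and monicity are — but it is needed for additivity, as the computation above shows; without it the apex term $a_{|S|+|S'|-|Y|}$ would not decompose correctly.) I expect the main obstacle to be purely bookkeeping: making sure the pushout-size formula $|S +_Y S'| = |S| + |S'| - |Y|$ is correctly justified from monicity of the legs being composed, and keeping the telescoping sums straight. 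Once that is in place the verification is a routine substitution, so I would present it crisply rather than belaboring it.
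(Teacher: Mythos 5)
Your verification of functoriality matches the paper's proof essentially step for step: the paper likewise observes that nonnegativity follows from monotonicity of the sequences (together with $|X|\le|S|$ and $|Y|\le|S|$, which hold by monicity), that $F_{a,z}(\id_S)=0$ by direct substitution, and that additivity under $\cp$ follows from the species count $|S|+|S'|-|Y|$ for the composite of mope nets combined with the identity $a_k+z_k=k(a_1+z_1)$; your telescoping computation is the same as the paper's, merely run in the opposite direction. Your explicit justification of the pushout-size formula via monicity of the legs is slightly more careful than the paper's one-line assertion, which is welcome.

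There is, however, one genuine error in the proposal: the claim that ``the same telescoping'' gives $F_{a,z}(\Ma\oplus\Ma')=F_{a,z}(\Ma)+F_{a,z}(\Ma')$. Writing $c=a_1+z_1$, additivity under $\oplus$ would require $a_{|X|+|X'|}=a_{|X|}+a_{|X'|}$ and $z_{|Y|+|Y'|}=z_{|Y|}+z_{|Y'|}$, and neither follows from $a_k+z_k=kc$ together with monotonicity. Concretely, take $a_k=1$ for $k\ge 1$ (with $a_0=0$) and $z_k=2k-1$ for $k\ge 1$ (with $z_0=0$): both sequences are non-decreasing and satisfy $a_k+z_k=2k$, yet $F_{a,z}(\eta_1)=a_1=1$ while $F_{a,z}(\eta_1\oplus\eta_1)=F_{a,z}(\eta_2)=a_2=1\neq 2$. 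So $F_{a,z}$ is in general \emph{not} a monoidal functor, only a functor --- which is all the lemma asserts, and all that the paper proves. (This is consistent with the rest of the paper: Theorem~\ref{thm:all-invariants-are-additive} establishes that every functor out of $\openpetri$ is monoidal, but its proof uses the non-monic generators $\mu$ and $\delta$ and does not apply to $\mope$; the classification in Theorem~\ref{thm:mope-invariant-classification} genuinely includes non-monoidal functors.) You should simply delete the monoidality step; the rest of your argument stands.
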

\begin{proof}

First, $F_{a,z}$ is not only integer-valued but
in fact natural-valued, since $a,z$ are
non-decreasing.
And
by direct computation $F_{a,z}(\id_S) = (a_{|S|}-a_{|S|}) + (z_{|S|}-z_{|S|}) = 0$.

We now verify that $F_{a,z}$ respects composition. Consider two mope nets $\Ma = (X \to S \leftarrow Y, P)$ and $\Ma' = (Y \to S' \leftarrow Z, P')$.
By the monotonicity of the legs of $\Ma$ and $\Ma'$, there are $|S| + |S' | - |Y|$ species in the decoration of $\Ma \cp \Ma'$. Therefore,

\begin{align*}
  F(\Ma) + F(\Ma') & = (a_{|S|}-a_{|X|} + z_{|S|}-z_{|Y|}) + (a_{|S'|}-a_{|Y|} + z_{|S'|}-z_{|Z|})
  \\&=~
  (|S|+|S\pr|-|Y|)(a_{1} + z_{1})-a_{|X|}-z_{|Z|}
  \\&=~
  (a_{|S|+|S\pr|-|Y|} + z_{|S|+|S\pr|-|Y|})-a_{|X|}-z_{|Z|}
  \\&=~
  (a_{|S| + |S'| - |Y|}-a_{|X|}) + (z_{|S| + |S'| - |Y|}-z_{|Z|})
  \\&=~
  F(\Ma \cp \Ma').
\end{align*}
Hence, $F_{a,z}$ is functorial.
\end{proof}

We next turn to show that any functor must satisfy a certain law and the value of this functor on transitionless mope net is determined by its value on boundary mope nets.

\begin{lemma}\label{lem:functor-implies-condition}
Let $F \colon \mope \to \bn$ be a functor and let $\Ma = (X \to S \leftarrow Y, 0_S)$ be a transitionless mope net. Then the following holds 

\begin{enumerate}
    \item[(a)] $k (F(\eta_1)+F(\epsilon_1)) = F(\eta_k)+F(\epsilon_k)$ for all natural $k$.
    \item[(b)] $F(\Ma) = (F(\eta_{S}) - F(\eta_{X})) + (F(\epsilon_{S}) - F(\epsilon_{Y})).$
\end{enumerate}
\end{lemma}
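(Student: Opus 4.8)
The plan is to prove (a) and (b) separately, with (a) feeding into the proof of (b).

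\textbf{Proof of (a).} This is a direct adaptation of the corresponding computation in the proof of Lemma~\ref{lem:transitionless}, but now carried out in $\mope$ rather than $\openpetri$. Note that $\eta_k$ and $\epsilon_k$ are boundary \emph{mope} nets (their legs $1_S$ and the initial map are monic, the latter vacuously), so they live in $\mope$. First I would observe that for any $k$, the composite $\eta_k \cp \epsilon_k$ is a mope net with empty domain and codomain decorated by $0_k$, and moreover $\eta_k \cp \epsilon_k = \bigoplus_{j=1}^{k}(\eta_1 \cp \epsilon_1)$ since monoidal product and composition of nets with empty boundary coincide — this uses the relations in Remark~\ref{rem01}, namely $\eta_k = \eta_1^{\oplus k}$ and $\epsilon_k = \epsilon_1^{\oplus k}$. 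Applying $F$ and using that $F$ is a functor (so $F$ of a composite is the sum, and here also of the monoidal product since both sides have empty boundary) yields
\[
    F(\eta_k) + F(\epsilon_k) = F(\eta_k \cp \epsilon_k) = \sum_{j=1}^k F(\eta_1 \cp \epsilon_1) = k\big(F(\eta_1) + F(\epsilon_1)\big),
\]
which is exactly (a). The one subtlety to check is that $F$ applied to a monoidal product of empty-boundary nets is the sum of the values; this follows because for such nets $\oplus$ and $\cp$ agree, and $F$ respects $\cp$ as a functor — no monoidality of $F$ is needed here.

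\textbf{Proof of (b).} The idea is to decompose the transitionless mope net $\Ma = (X \xrightarrow{i} S \xleftarrow{o} Y, 0_S)$ using Lemma~\ref{lem:extract-net}: since $\Ma$ is monic and transitionless, $\Ma = \Qa \cp \Ra \cp \Qa'$ with $\Qa = (X \xrightarrow{i} S \xleftarrow{1_S} S, 0_S)$, $\Ra = (S \xrightarrow{1_S} S \xleftarrow{1_S} S, 0_S) = \id_S$, and $\Qa' = (S \xrightarrow{1_S} S \xleftarrow{o} Y, 0_S)$, all monic. Since $\Ra = \id_S$ we get $F(\Ma) = F(\Qa) + F(\Qa')$. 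It now suffices to show $F(\Qa) = F(\eta_S) - F(\eta_X)$ and $F(\Qa') = F(\epsilon_S) - F(\epsilon_Y)$. For the first, note $\eta_X \cp \Qa = (0 \to S \xleftarrow{1_S} S, 0_S) = \eta_S$: composing the cospan $0 \to X \xleftarrow{1_X} X$ with $X \xrightarrow{i} S \xleftarrow{1_S} S$ gives $0 \to S \xleftarrow{1_S} S$ (the pushout $X +_X S = S$), and the decoration stays transitionless. Hence $F(\eta_X) + F(\Qa) = F(\eta_S)$, giving $F(\Qa) = F(\eta_S) - F(\eta_X)$ — and this is a genuine natural number, so no cancellation issue. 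Symmetrically, $\Qa' \cp \epsilon_Y = \epsilon_S$ yields $F(\Qa') = F(\epsilon_S) - F(\epsilon_Y)$. Summing gives (b).

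\textbf{Anticipated obstacle.} The calculations are all routine cospan pushouts, so the only real care needed is bookkeeping: making sure each auxiliary net ($\eta_X \cp \Qa$, $\Qa' \cp \epsilon_Y$, $\eta_k \cp \epsilon_k$) is genuinely a mope net (monic legs) so that $F$ is defined on it, and being careful that in part (a) the passage from $\cp$ to $\oplus$ is legitimate precisely because the boundaries are empty. I do not expect this lemma to require the monoidality of $F$ at all — only functoriality — which is worth stating explicitly, since it parallels how Lemma~\ref{lem:transitionless} was proved before Theorem~\ref{thm:all-invariants-are-additive}. If anything, the mild subtlety is that part (a) is stated as a necessary condition a functor must satisfy (mirroring Equation~\ref{eq:az-condition} in Lemma~\ref{lem:boundary-type-functor}), so I would flag that $F(\eta_1) + F(\epsilon_1)$ plays the role of $a_1 + z_1$.
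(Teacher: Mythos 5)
Your proof is correct, and part (a) is essentially the paper's argument: both reduce to the observation that $\eta_k\cp\epsilon_k$ is $k$ disjoint copies of $\eta_1\cp\epsilon_1$ and that composition and monoidal product coincide on empty-boundary nets, so only functoriality of $F$ is needed. For part (b) you take a mildly different route. The paper caps the \emph{whole} net, forming $\eta_{X}\cp\Ma\cp\epsilon_{Y}=(0\to S\leftarrow 0,0_S)\cong(\eta_1\cp\epsilon_1)^{\cp |S|}$, obtaining the single relation $F(\eta_{X})+F(\Ma)+F(\epsilon_{Y})=|S|\bigl(F(\eta_1)+F(\epsilon_1)\bigr)=F(\eta_{S})+F(\epsilon_{S})$ (the last equality by part (a)) and then rearranging. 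You instead factor $\Ma=\Qa\cp\id_S\cp\Qa'$ via Lemma~\ref{lem:extract-net} and cap each half separately, using $\eta_{X}\cp\Qa=\eta_{S}$ and $\Qa'\cp\epsilon_{Y}=\epsilon_{S}$. Both are valid; yours has the small advantage of exhibiting each parenthesized difference $F(\eta_{S})-F(\eta_{X})=F(\Qa)$ and $F(\epsilon_{S})-F(\epsilon_{Y})=F(\Qa')$ as individually a natural number, so the subtraction in the statement never leaves $\Nb$, whereas the paper's rearrangement only guarantees that the total is nonnegative. Your side remarks are also accurate: all the auxiliary nets involved have monic legs, and neither argument uses monoidality of $F$, consistent with the lemma being stated for a bare functor.
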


\begin{proof}
The composite mope net
\begin{equation*}
    \eta_{|X|}\cp\Ma\,\cp\,\epsilon_{|Y|} = (0 \to S \leftarrow 0, 0_S)
\end{equation*}
is isomorphic to $(\eta_1 \cp \epsilon_1)$ composed with itself $|S|$ times. Therefore,
\begin{equation}\label{eq:F-on-boundary-nets}
    |S|(F(\eta_1)+ F(\epsilon_1)) = F(\eta_{X})+F(\Ma)+F(\epsilon_{Y}).
\end{equation}

Applying Equation~\ref{eq:F-on-boundary-nets} to the case where $\Ma$ is the identity mope net on $S$ and $|S| = k$ proves the first assertion that $k
(F(\eta_1)+F(\epsilon_1)) = F(\eta_k)+F(\epsilon_k)$.

Finally to prove (b), applying the equality $|S|(F(\eta_1) + F(\epsilon_1)) = F(\eta_{S}) + F(\epsilon_{S})$ to Equation~\ref{eq:F-on-boundary-nets} and rearranging implies that $F(\Ma) = (F(\eta_{S}) - F(\eta_{X})) + (F(\epsilon_{S}) - F(\epsilon_{Y}))$.
\end{proof}

We will next prove a crucial result that is similar in nature to Theorem~\ref{thm:all-invariants-are-additive}. Note that the proof technique in Theorem~\ref{thm:all-invariants-are-additive} (specifically Lemma~\ref{lem:transitionless}) cannot be extended to mope nets since it requires the use of the non-monic open Petri nets \(\mu\) and \(\delta\).

\begin{lemma}\label{lem:moo}
  Let 
  $F \colon  \mope \to  \bn$ be
  a functor. Let
  $\mathcal{M}$ be any endomorphism in $\mope$ and let $\id_{S}$ be the identity mope net with $S$ species.
  Then
  $F(\mathcal{M} \oplus \id_{S}) = F(\mathcal{M})$.
\end{lemma}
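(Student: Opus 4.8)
The plan is to mimic the structure of the proof of Theorem~\ref{thm:all-invariants-are-additive}, but to replace the caps $\eta_X,\epsilon_Y$ (which there could be attached freely since $\openpetri$ contains $\mu,\delta$) with the boundary mope nets $\eta_{\bullet},\epsilon_{\bullet}$, which live in $\mope$, and then to absorb the resulting boundary correction terms using the identities of Lemma~\ref{lem:functor-implies-condition}. Since every object of $\mope$ is a finite set and $\mathcal M$ is an endomorphism, write $\mathcal M = (X \xrightarrow{i} A \xleftarrow{o} X,\, P)$ for some finite set $X$, decoration $P$, and apex $A$; let $k = |X|$ and $s = |S|$.

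First I would cap off $\mathcal M \oplus \id_S$ on both sides to produce a mope net with empty domain and codomain. Concretely, consider
\[
  \mathcal R \coloneqq (\eta_{X} \oplus \eta_{S}) \cp (\mathcal M \oplus \id_S) \cp (\epsilon_{X} \oplus \epsilon_{S}),
\]
which is an endomorphism of $\emptyset$ in $\mope$ (each boundary net is a genuine mope net since $1_{(-)}$ is monic). As in Theorem~\ref{thm:all-invariants-are-additive}, for mope nets with empty domain and codomain the composite and the monoidal product agree, so using $\eta_X \oplus \eta_S = \eta_{X+S}$ and $\epsilon_X \oplus \epsilon_S = \epsilon_{X+S}$ together with $\eta_k \cp \id_k \cp \epsilon_k \cong (\eta_1 \cp \epsilon_1)^{\oplus k}$ we get, applying $F$ and using functoriality,
\[
  F(\eta_{X+S}) + F(\mathcal M \oplus \id_S) + F(\epsilon_{X+S})
  = F(\eta_X \cp \mathcal M \cp \epsilon_X) + F(\eta_S \cp \id_S \cp \epsilon_S).
\]
The same construction without the $\id_S$ summand gives
\[
  F(\eta_{X}) + F(\mathcal M) + F(\epsilon_{X}) = F(\eta_X \cp \mathcal M \cp \epsilon_X),
\]
so subtracting yields
\[
  F(\mathcal M \oplus \id_S) - F(\mathcal M)
  = \big(F(\eta_X) + F(\epsilon_X) - F(\eta_{X+S}) - F(\epsilon_{X+S})\big) + F(\eta_S \cp \id_S \cp \epsilon_S).
\]
Now I would invoke Lemma~\ref{lem:functor-implies-condition}(a) (equivalently Equation~\ref{eq:F-on-boundary-nets}): $F(\eta_n) + F(\epsilon_n) = n\,(F(\eta_1)+F(\epsilon_1))$ for every $n$, and $F(\eta_S \cp \id_S \cp \epsilon_S) = s\,(F(\eta_1)+F(\epsilon_1))$. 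Substituting $n = k$ and $n = k+s$ shows the right-hand side equals $k(F(\eta_1)+F(\epsilon_1)) - (k+s)(F(\eta_1)+F(\epsilon_1)) + s(F(\eta_1)+F(\epsilon_1)) = 0$, giving $F(\mathcal M \oplus \id_S) = F(\mathcal M)$ as desired.

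The main obstacle, and the place to be careful, is the reduction "composite equals monoidal product for endomorphisms of $\emptyset$" and the isomorphism $\eta_n \cp \id_n \cp \epsilon_n \cong (\eta_1 \cp \epsilon_1)^{\oplus n}$ — these need the explicit pushout description of composition in $\mope$ from Section~\ref{sec:prelim}, and one must check the decorations (all transitionless) match up; this is exactly the style of bookkeeping already done in Lemma~\ref{lem:functor-implies-condition}, so it can be cited rather than redone. Everything else is formal manipulation inside the monoid $(\Nb,+)$, using only functoriality of $F$ and the previously established relation among the $F(\eta_n), F(\epsilon_n)$; no cancellation beyond what Lemma~\ref{lem:functor-implies-condition} already supplies is required.
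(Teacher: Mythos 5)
Your proposal is correct and follows essentially the same route as the paper's proof: cap $\mathcal{M}\oplus\id_S$ with $\eta_{X+S}$ and $\epsilon_{X+S}$, rewrite the resulting endomorphism of $\emptyset$ as a composite of the two capped factors, and evaluate the boundary terms via Lemma~\ref{lem:functor-implies-condition} before cancelling. Your formal subtractions in $\mathbb{N}$ are just the paper's ``cancelling in $\mathbb{N}$'' phrased in the Grothendieck group, so nothing is missing.
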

\begin{proof}
  Let $\Ma$ be a mope net with domain and codomain $X$. Define $$\mathcal{P} \coloneqq (\eta_{X}\oplus\eta_{S}) \cp (\mathcal{M} \oplus \id_{S}) \cp (\epsilon_{X}\oplus\epsilon_S)$$
  Note that $\eta_{X} \oplus \eta_{S} = \eta_{X + S}$ and likewise $\epsilon_{X} \oplus \epsilon_{S} = \epsilon_{X + S}$. Applying $F$ shows that
  \begin{equation}\label{eq:F-applied-to-P-1}
      F(\mathcal{P}) = F(\eta_{|X + S|}) + F(\mathcal{M} \oplus
  \id_{S}) + F(\epsilon_{|X + S|}) = |X + S| (F(\eta_1) + F(\epsilon_1)) + F(\mathcal{M} \oplus
  \id_{S}).
  \end{equation} 
  The second equality follows from the equality proven in Lemma~\ref{lem:functor-implies-condition}.
  
  On the other hand, rearranging the factors of $\Pa$ yields
  $$\mathcal{P} = (\eta_{X} \cp \mathcal{M}\cp \epsilon_{X}) \oplus (\eta_{S} \cp \id_{S} \cp \epsilon_{S}).$$
  The monoidal product of two open Petri nets whose feet are both the empty set is isomorphic to their composite. Since the two factors of $\Pa$ satisfy this criteria, we have
  $$\mathcal{P} = (\eta_{X} \cp \mathcal{M}\cp \epsilon_{X}) \cp (\eta_{S} \cp \id_{S} \cp \epsilon_{S})$$
  
  Applying $F$ to this equality yields
  \begin{equation}\label{eq:F-applied-to-P-2}
    F(\mathcal{P}) =  F(\eta_{X}) + F(\mathcal{M}) + F(\epsilon_{X}) + F(\eta_{X}) + F(\id_{S}) + F(\epsilon_{S}) = |X + S| (F(\eta_1) + F(\epsilon_1)) + F(\mathcal{M}).
  \end{equation} Again, the second equality follows from applying the equality proven in Lemma~\ref{lem:functor-implies-condition} twice.

  Comparing Equation~\ref{eq:F-applied-to-P-1} and Equation~\ref{eq:F-applied-to-P-2} and cancelling in $\Nn$, we establish the claim.
\end{proof}

We are now ready for the main result of this section.

\begin{theorem}\label{thm:mope-invariant-classification}
   Every functor $F \colon  \mope \to  \bn$ decomposes as  
    \[
        F_{a,z} + \sum_{\bt} d_\bt F_\bt
    \] as $\bt$ ranges over body types for coefficients $d_\bt \in \Nb$ and non-decreasing sequences $\{a_k\}_{k \in \Nb}$ and $\{z_k\}_{k \in \Nb}$ satisfying the condition in Equation~\ref{eq:az-condition}.
\end{theorem}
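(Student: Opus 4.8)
The plan is to read the decomposition data straight off $F$. Set $d_\bt := F(\Pa_\bt)$ for every body type $\bt$, and set $a_k := F(\eta_k)$ and $z_k := F(\epsilon_k)$ for $k \in \Nb$; in particular $a_0 = z_0 = 0$ because $\eta_0$ and $\epsilon_0$ are both the identity on the empty set, and each $d_\bt$ lies in $\Nb$ since $F$ takes values in $\bn$. Evaluating the claimed identity on the mope nets $\Pa_\bt$, $\eta_k$ and $\epsilon_k$ — on each of which $F_\bt$ and $F_{a,z}$ are immediate to compute — shows that this is in fact the \emph{only} possible choice, so the two things left to verify are: (i) that $\{a_k\}$ and $\{z_k\}$ satisfy the hypotheses of Lemma~\ref{lem:boundary-type-functor}, so that $F_{a,z}$ is a genuine functor $\mope \to \bn$; and (ii) that $F$ and $F_{a,z} + \sum_\bt d_\bt F_\bt$ agree on every mope net.

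For (i), the relation $k(a_1 + z_1) = a_k + z_k$ is exactly Lemma~\ref{lem:functor-implies-condition}(a). For monotonicity, one checks directly from the definition of composition that in $\mope$ one has $\eta_{k+1} = \eta_k \cp (\id_k \oplus \eta_1)$ and $\epsilon_{k+1} = (\id_k \oplus \epsilon_1) \cp \epsilon_k$ (every leg appearing is an identity or a coproduct inclusion, hence monic, so these composites are legal in $\mope$). Applying $F$ gives $a_{k+1} = a_k + F(\id_k \oplus \eta_1)$ and $z_{k+1} = z_k + F(\id_k \oplus \epsilon_1)$, and the extra terms are nonnegative because $F$ lands in $\bn$; hence both sequences are non-decreasing. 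So $F_{a,z}$ exists by Lemma~\ref{lem:boundary-type-functor}, and $F_{a,z} + \sum_\bt d_\bt F_\bt$ is a well-defined functor to $\bn$ — on any fixed mope net only finitely many of the $F_\bt$ are nonzero.

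For (ii), let $\Ma = (X \to S \leftarrow Y, P)$ be an arbitrary mope net whose decoration has transitions $\tau_1, \dots, \tau_N$ of body types $\bt_1, \dots, \bt_N$. By Lemma~\ref{lemma:decomp-atomic}, together with the remark that the factors of a mope net stay in $\mope$, write $\Ma = \Qa \cp \Ga_1 \cp \cdots \cp \Ga_N \cp \Qa'$ where $\Qa = (X \to S \xleftarrow{1_S} S, 0_S)$ and $\Qa' = (S \xrightarrow{1_S} S \leftarrow Y, 0_S)$ are transitionless and each $\Ga_i$ is a single-transition mope net with identity cospan on $S$. Applying $F$ and Lemma~\ref{lem:functor-implies-condition}(b) to the transitionless factors gives $F(\Qa) + F(\Qa') = (a_{|S|} - a_{|X|}) + (z_{|S|} - z_{|Y|}) = F_{a,z}(\Ma)$. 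By Remark~\ref{rmk:atomic}, $\Ga_i \cong \Pa_{\bt_i} \oplus \id_{S_i'}$ for the finite set $S_i'$ of species not touched by $\tau_i$; since $\Pa_{\bt_i}$ is an endomorphism in $\mope$, Lemma~\ref{lem:moo} yields $F(\Ga_i) = F(\Pa_{\bt_i}) = d_{\bt_i}$. Adding everything up, $F(\Ma) = F_{a,z}(\Ma) + \sum_{i=1}^N d_{\bt_i}$, and this last sum equals $\sum_\bt d_\bt F_\bt(\Ma)$ because $F_\bt(\Ma)$ is precisely the number of $\tau_i$ of body type $\bt$.

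The real work has already been done in the lemmas: Lemma~\ref{lem:moo} is what replaces Theorem~\ref{thm:all-invariants-are-additive} (unavailable here, as its proof uses the non-monic generators $\mu$ and $\delta$), and it is what lets us discard the spectator species that the atomic decomposition of Lemma~\ref{lemma:decomp-atomic} drags along in each factor $\Ga_i$. Given those lemmas, the only delicate points in the present argument are bookkeeping ones: confirming admissibility of $\{a_k\}$ and $\{z_k\}$, and correctly matching the transitionless boundary factors $\Qa$ and $\Qa'$ against the closed form for $F_{a,z}$ via Lemma~\ref{lem:functor-implies-condition}(b).
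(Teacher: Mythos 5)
Your proposal is correct and follows essentially the same route as the paper's proof: define $a_k = F(\eta_k)$, $z_k = F(\epsilon_k)$, $d_\bt = F(\Pa_\bt)$, verify admissibility of the sequences via Lemma~\ref{lem:functor-implies-condition} and a one-step composition argument for monotonicity, then use the atomic decomposition together with Lemma~\ref{lem:functor-implies-condition}(b) for the transitionless factors and Lemma~\ref{lem:moo} to strip the spectator species from each single-transition factor. The only differences are cosmetic (writing the monotonicity witness as $\id_k \oplus \eta_1$ rather than as an explicit cospan, and tallying $\sum_i d_{\bt_i} = \sum_\bt d_\bt F_\bt(\Ma)$ directly instead of checking agreement factor by factor), plus a welcome extra remark on uniqueness of the coefficients.
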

\begin{proof}
The functors have been introduced in Lemma~\ref{lem:body-type-functor} and Lemma~\ref{lem:boundary-type-functor}.

Now let $F\colon \mope \to \bn$ be any 
functor. Define sequences $a_k = F(\eta_k)$ and $z_k = F(\epsilon_k)$ along with coefficients $d_\bt = F(\Pa_\bt)$.

First we show that the sequences $a$ and $z$ are non-decreasing. Note that \[\eta_{k + 1} = \eta_k \cp (k \to {k + 1} \leftarrow k+1, 0_{k + 1}).\] Applying $F$ to both sides and appealing to functoriality implies that $a_{k +1} = F(\eta_{k + 1})$ is greater than or equal to $a_k = F(\eta_k)$. Therefore, the sequence $a$ is non-decreasing and likewise for $z$. Furthermore, they also  satisfy the condition in Equation~\ref{eq:az-condition} by Lemma~\ref{lem:functor-implies-condition}. 

Let $\Ma = (X \to S \leftarrow Y, P)$ be any mope net. By Corollary~\ref{lem:extract-net}, $\Ma = \Qa \cp \Ga_1 \cp \cdots \cp \Ga_n \cp \Qa'$ where $\Qa$ and $\Qa'$ are transitionless mope nets and the $\Ga_i$ are body nets. 
Since  $\Qa$ and $\Qa'$ are transitionless, $F_\bt(\Qa) = 0$ and $F_\bt(\Qa') = 0$ for all body types $\bt$. Therefore, functoriality of $F$, $F_{a,z}$, and the $F_\beta$ implies that it suffices to show that $F(\Qa) + F(\Qa') = F_{a,z}(\Qa) + F_{a,z}(\Qa')$ and that $F$ and $F_{a,z} + \sum_{\bt} d_\bt F_\bt$ agree on $\Ga_i$ for all $i$.

The proof of Lemma~\ref{lem:extract-net}  in fact shows that $\Qa = (X \to S \xleftarrow{1_S} S, 0_S)$ and $\Qa' = (S \xrightarrow{1_S} S \leftarrow Y, 0_S)$. 

Note that $F(\Qa \cp \Qa') = F(X \to S \leftarrow Y,  0_S)$. By Lemma~\ref{lem:functor-implies-condition}, this equals 
\[
    (F(\eta_{S}) - F(\eta_{X})) + (F(\epsilon_{S}) - F(\epsilon_{Y}))
\] which by definition of $a$ and $z$ equals $(a_{|S|} - a_{|X|}) + (z_{|S|} - z_{|Y|})$. Therefore, 
\begin{align*}
    F(\Qa) + F(\Qa') &= F(\Qa \cp \Qa') \\
    & = (a_{|S|} - a_{|X|}) + (z_{|S|} - z_{|Y|})\\
    & = F_{a,z}(\Qa) + F_{a,z}(\Qa').
\end{align*}

For each $i$, let $\tau_i$ be the single transition of $\Ga_i$ and let $\bt_i$ be the body type of $\tau_i$. Let $S_i$ be the species in the decoration of $\Ga_i$ which are neither  source nor target species of $\tau_i$. Then $\Ga_i = \Pa_{\bt_i} \oplus 1_{S_i}$.  In other words, since each $\Ga_i$ is body mope net, it is isomorphic to the monoidal product of an irreducible body net and an identity mope net. Recall that the canonical irreducible body nets $\Pa_\bt$ are defined in Definition~\ref{def:body-type}. By Lemma~\ref{lem:moo} any functor $\mope \to \bn$ agree on $\Ga_i$ and $\Pa_{\bt_i}$.

Since $F$ and $F_{\bt_i}$ are $0$ on the identity and $F_{\bt_i}(\Pa_{\bt_i}) = 1$ we have, 
\[
    F(\Ga_i) = F(\Pa_{\bt_i}) = F(\Pa_{\bt_i})F_{\bt_i}(\Pa_{\bt_i}) = 
    F(\Pa_{\bt_i})F_{\bt_i}(\Ga_i).
\]
For body types $\bt \neq \bt_i$, \[
    F_\bt(\Ga_i) = F_\bt(\Pa_{\bt_i}) = 0.
\]
Finally, the underlying cospan of $\Ga_i$ is the identity, so $F_{a,z}(\Ga_i) = 0$.  All together, this implies that $F$ and $F_{a,z} + \sum_{\bt} d_\bt F_\bt$ agree on $\Ga_i$.

\end{proof}

\section{Final Remarks}
\noindent
This paper presents an in-depth study of additive invariants for open Petri nets, conceptualized as a monoidal functor from a symmetric monoidal cospan category, $\openpetri$, to $\bn$. Our research makes significant strides in both the development and classification of all $\mathbb{N}$-valued additive invariants.

We have shown that all functors from $\openpetri$ to $\cc{B}\mathbb{N}$ are monoidal, and thus additive invariants. Our study characterizes these invariants for both general open nets in \textbf{OPetri} and the subcategory \textbf{MOPetri}, which comprises those open Petri nets whose underlying cospan has monomorphic legs. Further, we found that the additive invariants are completely characterized by their values on specific classes of Petri nets: for general open Petri nets, the generators are single-transition nets, whereas for \textbf{MOPetri}, they include both all single-transition and transitionless Petri nets. In exploring the broader applicability for our main classification theorems, we identify a special subclass of cancellative monoids, characterized by the unique property that the identity is the sole element with an inverse.

The classification of these additive invariants relies strongly on key decomposition lemmas for open Petri nets, which assert that any open Petri net can be canonically factorized, using tensor or composite operations, into single-transition and transitionless open Petri nets. These Lemmas are of independent interest given the significant literature on open Petri nets in applied category theory.

In this study, we focused on the invariants of open Petri nets with the codomain $\cc{B}\mathbb{N}$. The classification problem of functors out of $\openpetri$, however seems very challenging in general. Despite the progress made in this paper, several avenues remain open for future research, particularly in exploring invariants within different monoids. Investigating invariants beyond numerical values, such as $\mathbb{Z}/2$-valued of open Petri nets, raises further inquiry into whether such invariants could provide new insights not captured by $\mathbb{N}$-valued invariants, similar to how $\mathbb{Z}/2$-valued co/homology invariants in topology reveal aspects not visible in $\mathbb{Z}$-valued invariants.

Furthermore, our exploration of additive invariants confirms their role as compositional semantics for Petri nets. Mass action kinetics, while a highly descriptive invariant on open Petri nets, are computationally impractical for verification and specification.  Conversely the relative simplicity of the additive invariants presented in this paper, make them particularly practical for developing domain-specific Petri nets. An additive invariant can represent constraints either imposed by domain experts or discovered empirically through real-world applications. The compositional nature of these invariants is critical for constraint verification and for modularly constructing networks to fulfill specific constraint requirements. These features are well-suited for integration into the software packages \href{https://github.com/AlgebraicJulia/AlgebraicPetri.jl}{AlgebraicPetri.jl} and Catcolab~\cite{catcolab}. Although not currently implemented in full, we have developed a working prototype. A full implementation would involve (1) a decomposition for open Petri nets into atomic components and (2) a mechanism to define additive invariants by assigning values to each atomic net.

To sum up, this paper promote for a more algebraic and compositional understanding of Petri nets. It lays the groundwork of valuable theoretical insights with potential applications in computational biology, computer science, network theory, and systems design with a deeper understanding of complex systems.

\bibliographystyle{plainnat}
\bibliography{biblio}

\end{document}